\newcommand{\N}{\mathbb{N}}
\newcommand{\Z}{\mathbb{Z}}
\newcommand{\R}{\mathbb{R}}
\newcommand{\C}{\mathbb{C}}
\newcommand{\St}{\text{Stab}}
\newcommand{\Ri}{\text{Rist}}
\newcommand{\tree}{\mathcal{T}}
\newcommand{\aut}{\text{Aut}\left(\mathcal{T} \right)}
\newcommand{\ros}{\rho_{\mathcal{S}}}
\newcommand{\rot}{\rho_{\mathcal{T}}}
\providecommand{\cs}[1]{C^{*}_{#1}}
\providecommand{\normsup}[1]{\lVert#1\rVert_{_{\infty}}}
\theoremstyle{plain} 
\newtheorem{theo}{Theorem}[section]
\newtheorem{cor}[theo]{Corollary}
\newtheorem{prop}[theo]{Proposition}
\newtheorem{prop-defi}[theo]{Proposition-Definition}
\newtheorem{lem}[theo]{Lemma}
\newtheorem*{theoetoile}{Theorem} 
\newtheorem*{propetoile}{Proposition}
\newtheorem*{coretoile}{Corollary}
\newtheorem*{defitoile}{Definition}
\newtheorem{proper}[theo]{Property}
\theoremstyle{definition} 
\newtheorem{defi}[theo]{Definition}
\theoremstyle{remark}
\newtheorem{exemple}[theo]{Example}
\newtheorem{remarque}[theo]{Remark}
\title{The regular and profinite representations of residually finite groups}
\author{Jean-Fran\c{c}ois Planchat}
\address{Mathematisches Institut, Bunsenstr. 3-5, D-37073 G\"{o}ttingen Germany }
\email{planchat@uni-math.gwdg.de}
\thanks{This work is a part of my PhD thesis done at the Université Paris 7 Denis Diderot. I would like to thank my advisor Andrzej \.{Z}uk for his constant support and help, as well as Georges Skandalis for useful discussions.}
\keywords{Residually finite groups, profinite representations, regular representation.}
\begin{document}

\begin{abstract}
Let $G$ be a residually finite group. To any decreasing sequence $\mathcal S = \left( H_n\right)_n $ of finite index subgroups of $G$ is associated a unitary representation $\rho_{\mathcal S}$ of $G$ on the Hilbert space $\bigoplus_{n=0}^{+\infty} \ell^2 \left( G/H_n\right) $. This paper investigates the following question: when does the representation $\rho_{\mathcal S} $ weakly contain the regular representation $\lambda$ of $G$?  
\end{abstract}

\maketitle

\section*{Introduction}

Let $G$ be a countable residually finite group and $ H$ be a finite index, not necessarily normal, subgroup. The group $G$ acts on $G/H $ and we let $\lambda_{G/H}$ denote the representation that $G$ admits on $\ell^2 \left(G/H \right)$. That is, we let 
\[\lambda_{G/H}(g)(\xi)(x)=\xi(g^{-1}\cdot x)\]  
with $g\in G, \ \xi \in \ell^2 \left(G/H \right) $ and $x \in G/H$. In this paper, we will mainly be interested in representations of the form 
\[\rho_{\mathcal S} := \bigoplus_{n=0}^{+\infty} \lambda_{G/H_n}  
\]
where $\mathcal S = \left( H_n\right)_n $ is a decreasing sequence of finite index, not necessarily normal, subgroups of $G$. Such representations are called \emph{profinite representations}. We will investigate the link between such representations and the regular representation $\lambda$ of $G$. Recall that $\lambda$ is the representation that $G$ admits on $\ell^2 \left( G\right)$ and defined for all $g\in G, \ \xi \in \ell^2 \left(G \right) $ and $h \in G$ by
\[
 \lambda(g)(\xi)(h)=\xi(g^{-1}h).
\]

As $\rho_{\mathcal S}$ is a sum of \emph{finite representations} (i.e. factorizing through a finite index subgroup of $G$) and $\lambda$ is a $C^0$\emph{-representation} (i.e. the coefficients $\left\langle \lambda(g)(\xi)\ | \ \psi \right\rangle $ tend to 0 when $g$ tends to infinity), as soon as $G$ is infinite, none of these two representations can be a subrepresentation of the other \cite{Dix77}. However, one can expect a weaker link between them, namely weak containment. The goal of this work can be stated as follows: when does the representation $\rho_{\mathcal S } $ weakly contain the regular representation $\lambda$ of $G$? 

Let us recall the definition of weak containment. 

\begin{defi}\label{defiwc}
If $G$ is a countable group and $\left(\pi_1, \mathcal H _1\right)$, $\left(\pi_2,\mathcal H _2 \right)$ are two unitary representations of $G$, we say that  $\pi_2$ weakly contains  $\pi_1$ (and write $\pi_1 \prec \pi_2$ ) if and only if for all $\epsilon>0$, $\xi \in  \mathcal H _1$ and $ K \subset G$ finite, there are vectors $\nu_1,\nu_2,\dots,\nu_n$ in $\mathcal H _2$ such that
\begin{equation}\label{wc}
 \forall g \in K, \ \left| \left\langle \pi_1(g)\xi | \xi\right\rangle - \sum_{i=1 }^n \left\langle \pi_2 (g)\nu_i | \nu_i \right\rangle \right| < \epsilon.
\end{equation}
\end{defi}

There is an equivalent formulation of this property in the framework of $C^*$-algebras \cite{Dix77,Fell60}: for every unitary representation $\left(\pi,\mathcal H \right) $ of $G$, we extend $\pi$ linearly to get a $*$-homomorphism 
\[
 \ell^1 \left(G\right) \xrightarrow{\pi} \mathcal B \left( \mathcal H  \right)
\]
where $\mathcal B \left( \mathcal H  \right) $ denotes the algebra of bounded operators on $\mathcal H $. One then defines 
\[
 \cs{\pi}\left( G \right):= \overline{\pi \left( \ell^1 \left(G\right) \right)}
\]
where the closure is taken with respect to the $C^*$-operator norm  $\lVert \cdot \rVert_{_{\mathcal B \left( \mathcal H  \right) }}$ on $\mathcal B \left( \mathcal H \right) $. Then, 
\begin{eqnarray*}
 \pi_1 \prec \pi_2  & \Longleftrightarrow & \cs{\pi_2}\left( G \right)\twoheadrightarrow\cs{\pi_1}\left( G \right)  \\
                    & \Longleftrightarrow & \forall f \in \ell^1\left( G \right),\ \lVert \pi_1 (f) \rVert_{_{\mathcal B \left(\mathcal H _1 \right)}} \leq \lVert  \pi_2 (f) \rVert_{_{\mathcal B \left( \mathcal H _2 \right)}} \\
                    &  \Longleftrightarrow & \forall f \in \ell^1\left( G \right),\ \text{sp}\left(\pi_1 (f) \right) \subset \text{sp}\left(\pi_2 (f) \right)
\end{eqnarray*}
where $\text{sp}\left(M \right) $ denotes the spectrum of a operator $M$. In particular, one remarks that $\pi_1$ and $\pi_2$ are weakly isomorphic (written $\pi_1 \simeq \pi_2$ and defined to be $\pi_1 \prec \pi_2$ and $\pi_2 \prec \pi_1$) if and only if the $C^*$-algebras $\cs{\pi_1}\left( G \right)$ and $ \cs{\pi_2}\left( G \right) $ are isomorphic. \\

The representation $\rho_{\mathcal S}$ comes from an action of $G$ on a countable set. In the preliminary Section \ref{preliminary}, we introduce the notion of \emph{locally somewhere free} action of a countable group $G$ on a measure space $\left(X,\nu\right)$: 

\begin{defitoile}[\ref{deflsf}]
We say that the action of $G$ on $\left(X,\nu\right)$ is \emph{locally somewhere free} (l.s.f. for short) if for every finite subset $K \subset G$, there is a Borel subset $F \subset X$ of positive measure such that the non-trivial elements of $K$ fixe almost no point in $F$:
 \[
 \forall \gamma \neq 1 \in K,\  x\in F \Rightarrow \gamma \cdot x \neq x \text{ a.e.}
 \] 
\end{defitoile}

If $X$ is metrizable, locally compact and separable, and $\nu$ is a Radon measure quasi-preserved by $G$, then one can prove the following:

\begin{propetoile}[\ref{lemsuff}]
 If the action of $G$ on $X$ is l.s.f. then the regular representation $\lambda$ of $\Gamma$ is weakly contained in $\rho_X$.
\end{propetoile}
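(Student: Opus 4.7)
The plan is to verify the definition of weak containment directly: for each $\xi \in \ell^2(G)$, finite $K \subset G$ and $\varepsilon > 0$, the task is to construct vectors $\nu_1,\dots,\nu_n \in L^2(X,\nu)$ whose diagonal $\rho_X$-coefficients sum to something close to $\langle \lambda(g)\xi \mid \xi\rangle$ on $K$. First I would do this for the cyclic vector $\xi = \delta_e$ of $\lambda$, whose diagonal coefficient is $g \mapsto \delta_{g,e}$, and then bootstrap to arbitrary finitely supported $\xi$ by a translation trick; density in $\ell^2(G)$ takes care of the rest.

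For the cyclic case $\xi = \delta_e$, I would apply the l.s.f.\ hypothesis to $K$ to obtain a Borel set $F$ with $\nu(F)>0$ on which every non-trivial $\gamma \in K$ acts without fixed points a.e. Fixing a compatible metric $d$ on $X$, each function $x \mapsto d(x,\gamma x)$ is positive a.e.\ on $F$, so for $\delta>0$ small enough the set
\[
F_\delta := \{x \in F : d(x,\gamma x) \geq \delta \text{ for every } \gamma \in K \setminus \{e\}\}
\]
still has positive measure. Since $X$ is separable and metrizable, I would partition a Borel subset of $F_\delta$ of positive measure into finitely many disjoint Borel pieces $E_1,\dots,E_n$, each of diameter less than $\delta$; then $\gamma E_i \cap E_i = \emptyset$ for every non-trivial $\gamma \in K$ and every $i$. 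Taking $\xi_i := \mathbf{1}_{E_i}/\sqrt{\nu(\bigsqcup_j E_j)}$ gives $\sum_i \lVert \xi_i\rVert^2 = 1$, while for $\gamma \in K \setminus \{e\}$ each coefficient $\langle \rho_X(\gamma)\xi_i \mid \xi_i\rangle$ is an integral over the empty set $E_i \cap \gamma E_i$ and therefore vanishes---the Radon--Nikodym cocycle coming from mere quasi-invariance is irrelevant here, since the domain of integration itself is empty. We obtain $\sum_i \langle \rho_X(\gamma)\xi_i \mid \xi_i\rangle = \delta_{\gamma,e}$ \emph{exactly} for every $\gamma \in K$.

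For a general finitely supported $\xi = \sum_{x \in S} a_x \delta_x \in \ell^2(G)$, I would repeat the previous construction with the enlarged finite set $K' := \{y^{-1}\gamma x : \gamma \in K,\ x,y \in S\}$ in place of $K$, obtaining vectors $\eta_j$, and then set $\nu_j := \sum_{x \in S} a_x\, \rho_X(x)\eta_j$. A direct expansion using $\rho_X(x)^* = \rho_X(x^{-1})$ yields $\sum_j \langle \rho_X(\gamma)\nu_j \mid \nu_j\rangle = \sum_{x,y \in S} a_x \bar a_y\, \delta_{y,\gamma x} = \langle \lambda(\gamma)\xi \mid \xi\rangle$ on $K$, and an $\ell^2$-approximation of arbitrary $\xi$ by finitely supported vectors then completes the argument. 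The step I expect to require the most care is the passage from ``no a.e.\ fixed point on $F$'' to ``uniformly positive displacement on a set of positive measure''; this is precisely where the metrizable and separable hypotheses on $X$ are used.
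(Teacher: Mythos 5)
Your proof is correct and follows essentially the same route as the paper: both arguments use the metric and separability to upgrade the l.s.f.\ set $F$ to a positive-measure set whose $K$-translates are pairwise disjoint from it, and then take normalized characteristic functions, whose $\rho_X$-coefficients reproduce $\delta_{g,e}$ exactly (the Radon--Nikodym factor being irrelevant since the integration domain is empty). The only difference is that the paper reduces to the cyclic vector $\delta_1$ by citing Fell's theorem, whereas you re-derive that reduction by hand via the translation trick and density, which is a valid but standard substitute.
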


Here, $\rho_X$ denotes the canonical unitary representation that $G$ admits on $L^2\left(X\right)$.

In order to use this result, we remark in Section \ref{sec1} that there is a correspondence between profinite representations and actions on \emph{rooted trees}. More precisely, to any decreasing sequence $\mathcal S = \left( H_n\right)_n $ of finite index subgroups of $G$, is associated a rooted tree $\tree_{\mathcal S}$ together with a spherically transitive action of $G$ on it. This correspondence is one-to-one and the representation $\rho_{\mathcal S}$ is isomorphic to the permutational representation $\rho_{\tree_{\mathcal S}}$ coming from the action of $G$ on $\tree_{\mathcal S}$. \\

In the sequel, we mostly use this interpretation to adress our problem. We thus briefly recall in Section \ref{sec2} some well-known facts about the automorphism group $\aut$ of a rooted tree. It turns out that the property $\lambda \prec \rho_{\tree}$ is linked to the size of the stabilizers $\St_G (v\tree) $ in $G$ of the \emph{subtrees} $v\tree$ of $\tree$. In Section \ref{sec3}, we use Proposition \ref{lemsuff} of Section \ref{sec1} in order to prove the following:

\begin{theoetoile}[\ref{theosuffisant}] Let $G$ be a countable group acting faithfully on a rooted tree $\tree$. 
\begin{enumerate}[i.]
 \item If for every vertex $v\in \tree^0$, the stabilizer $\St_G (v\tree)$ is trivial, then $\lambda \prec \rot$.
 \item More generally, if the set $\bigcup_{v\in \tree^0} \St_G (v\tree)$ is finite and has cardinality $n$, then $\lambda \prec \rot^{\otimes n}$. 
 \item One always has  $\lambda \prec \bigoplus_{n=1}^{+\infty}\rot^{\otimes n} $.
\end{enumerate}
\end{theoetoile}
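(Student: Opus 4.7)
The strategy is to apply Proposition~\ref{lemsuff} to auxiliary $G$-spaces of the form $(\tree^0)^k$ equipped with the counting measure and the diagonal $G$-action; for each $k\geq 1$ this yields $\rho_{(\tree^0)^k}\cong\rot^{\otimes k}$, and the l.s.f.\ condition reduces to the following combinatorial task: for every finite $K\subset G$, exhibit a $k$-tuple $(v_1,\dots,v_k)\in(\tree^0)^k$ such that every non-trivial $\gamma\in K$ moves at least one of the $v_i$.

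Part (iii) is the easy case. Take $X=\bigsqcup_{k\geq 1}(\tree^0)^k$, so $\rho_X\cong\bigoplus_{k\geq 1}\rot^{\otimes k}$. Given $K\setminus\{1\}=\{\gamma_1,\dots,\gamma_m\}$, the faithfulness of the action provides, for each $i$, a vertex $v_i$ with $\gamma_iv_i\neq v_i$; the singleton $\{(v_1,\dots,v_m)\}\subset(\tree^0)^m$ is then a positive (counting-)measure subset moved by every non-trivial element of $K$, verifying l.s.f. and giving $\lambda\prec\bigoplus_{k\geq 1}\rot^{\otimes k}$.

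For Part (ii), enumerate $\bigcup_v\St_G(v\tree)=\{1,s_2,\dots,s_n\}$ and take $X=(\tree^0)^n$, so $\rho_X\cong\rot^{\otimes n}$. Given a finite $K\subset G$, I would reserve the $i$-th coordinate ($2\leq i\leq n$) to witness the non-triviality of the ``rigid'' element $s_i$: pick a vertex $v_i$ with $s_iv_i\neq v_i$. Any descendant $v_i'\in v_i\tree$ still satisfies $s_iv_i'\neq v_i'$, because $s_iv_i\neq v_i$ forces $v_i\tree$ and $(s_iv_i)\tree$ to be disjoint subtrees at the same level. For each $\gamma\in K$ outside $\{1,s_2,\dots,s_n\}$, the hypothesis gives $\gamma\notin\St_G(v_i\tree)$, so $\gamma$ moves some vertex inside $v_i\tree$; by simultaneously descending each $v_i$ into a sufficiently deep descendant, I would arrange the refined tuple to accommodate every such non-rigid element of $K$. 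Part (i) is the special case $n=1$ of this construction, in which the only rigid element is the identity and one seeks a single vertex moved by every non-trivial element of $K$.

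The main obstacle is this simultaneous-descent step: within each reserved subtree $v_i\tree$ one needs a single deep descendant moved by every non-rigid element of $K$ — a common-moved-vertex problem inside a subtree. This is precisely where the hypothesis that $\bigcup_v\St_G(v\tree)$ is finite (or trivial in (i)) is decisive, since all elements of $G$ not on this finite list fix no subtree pointwise and therefore move vertices at every depth. Making this step rigorous likely requires either a careful descent argument exploiting the finiteness of the rigid set, or a measure-theoretic argument on $\partial\tree$ via the Lebesgue density theorem in the ultrametric structure of the boundary — ruling out that a non-rigid element could fix a set of rays that is "dense" enough to defeat the refinement.
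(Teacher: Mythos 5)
Your overall architecture is exactly the paper's: reduce each statement to checking that a diagonal action on a power of $\tree^0$ (with counting measure) is l.s.f.\ and invoke Proposition~\ref{lemsuff}, which for these discrete spaces amounts to producing, for each finite $K\subset G$, a tuple whose stabilizer meets $K$ only in $1$. Your treatment of part~(iii) is complete and coincides with the paper's. But for parts~(i) and~(ii) you have left unproved precisely the step that carries the whole theorem, and you say so yourself: given a finite set $F$ of non-trivial elements none of which fixes a subtree of $w\tree$ pointwise, find a single vertex of $w\tree$ moved by every element of $F$. Everything else in your part~(ii) (reserving one coordinate per non-trivial element of $\bigcup_v\St_G(v\tree)$, the downward persistence of being moved, padding the tuple) is correct bookkeeping around this missing lemma, and part~(i) \emph{is} this lemma; so as written the proposal does not prove (i) or (ii).

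The gap closes with a short pigeonhole argument, not with boundary measure theory. Suppose every vertex $v$ of $w\tree$ satisfies $\Upsilon(v):=F\cap\St_G(v)\neq\emptyset$. Since a tree automorphism fixing a vertex fixes all its ancestors, $\Upsilon(y)\subset\Upsilon(x)$ whenever $y\in x\tree$. Choose $v_0\in w\tree$ with $\lvert\Upsilon(v_0)\rvert$ minimal; then $\Upsilon(v)=\Upsilon(v_0)$ for every $v\in v_0\tree$, so any $g\in\Upsilon(v_0)$ fixes $v_0\tree$ pointwise --- contradicting the hypothesis that no element of $F$ fixes a subtree of $w\tree$. (Your alternative suggestion can also be made to work: each $\mathrm{Fix}_{\partial\tree}(g)$ is closed with dense complement when $g$ fixes no subtree, so a Baire category argument --- not the Lebesgue density theorem --- yields a ray eventually moved by all of $F$; but this is heavier than needed.) Note also that the finiteness of $\bigcup_v\St_G(v\tree)$ is not what makes this lemma go through; the lemma only needs $F$ itself to be finite and subtree-fixing-free. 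The finiteness of the rigid set is used, as in your plan and the paper's, only to bound by $n$ the number of coordinates required.
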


The proof of the first part amounts to show that the triviality of the stabilizers $\St_G (v\tree)$ is equivalent to the action of $G$ on $\tree$ to be l.s.f., so that Proposition \ref{lemsuff} can be applied. In Section \ref{sec4}, we study the inverse implication of this part of the Theorem.  
 
\begin{theoetoile}[\ref{theonec}]
Let $G$ be a countable group in which the normalizer $N_G \left(H \right)$ of any non-central finite group $H$ has infinite index in $G$.

 Suppose that $G$ acts spherically transitively on a rooted tree $\tree$. If there exists a subtree $v\tree$ whose stabilizer $\St_G \left( v\tree\right)$ in $G$ is not trivial, then the $*$-homomorphism $\rho_\tree$ defined on $\C G$ is not faithful. In particular, $\lambda \nprec \rho_{\mathcal S}$.  
\end{theoetoile}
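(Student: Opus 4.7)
My plan is to show that $\rho_\tree$ fails to be injective on $\C G$; granting this, the faithfulness of $\lambda$ on $\C G$ immediately implies $\lambda \nprec \rho_{\mathcal S}$, since any $\xi \neq 0$ in $\ker \rho_\tree$ satisfies $\normr{\xi} > 0 = \normn{\xi}$, contradicting the norm inequality characterizing weak containment.

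The first step is to characterize $\ker \rho_\tree$ combinatorially. Unfolding the definitions, $\rho_\tree(\xi) = 0$ is equivalent to $\lambda_{G/H_w}(\xi) = 0$ for every vertex $w \in \tree^0$, and a direct calculation with $\lambda_{G/H_w}(\xi)\,\delta_{xH_w}$ recasts this as the coset-sum condition
\[
\sum_{k \in H_w} \xi(xk) = 0 \qquad \text{for all } x \in G \text{ and all } w \in \tree^0.
\]
Thus the task is to produce a finitely supported function on $G$ summing to zero on every coset $xH_w$ of every vertex stabilizer.

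Next, I would extract a non-central finite subgroup from $\St_G(v\tree)$. Pick $h \in \St_G(v\tree)\setminus\{1\}$. A preliminary check shows $\St_G(v\tree) \cap Z(G) = \{1\}$: a central $h$ fixing $v\tree$ pointwise also fixes every $G$-translate $g \cdot v\tree^0$ by centrality; combined with the automatic fixation of the ancestors of $v$ (any rooted-tree automorphism fixing $v$ fixes the path to the root), spherical transitivity on each level forces $h$ to act trivially on $\tree$, hence $h = 1$ by faithfulness. Assuming $h$ can be taken of finite order (the torsion-free case requires a parallel argument along the same lines), $F := \langle h \rangle$ is a non-central finite subgroup, and the hypothesis gives $[G : N_G(F)] = \infty$. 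So $F$ admits infinitely many distinct conjugates $F_i = g_i F g_i^{-1}$, each contained in $\St_G(g_i v\tree)$ by conjugating the inclusion $F \subseteq \St_G(v\tree)$.

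The heart of the proof, and its main obstacle, is the construction of the kernel element $\xi$ from the conjugates $F_i$. A natural candidate is $\xi = \sum_i c_i \hat{F}_i$ with $\hat{F}_i := \sum_{f \in F_i} f$ and only finitely many $c_i$ non-zero. At any vertex $w$ fixed by every $F_i$ appearing in the sum---in particular $w$ equal to the root, and more generally any $w \in \bigcap_i \mathrm{Fix}(F_i)$---one has $F_i \subseteq H_w$ and $\hat{F}_i * \mathbb{1}_{H_w} = |F|\,\mathbb{1}_{H_w}$, so the coset-sum condition collapses to $\sum_i c_i = 0$. At the remaining vertices, the condition instead controls how each $F_i$ distributes among the cosets of $H_w$, and the essential content of the normalizer hypothesis is that the infinite supply of distinct conjugates supplies enough degrees of freedom to solve the resulting finite linear system non-trivially while keeping $\xi$ finitely supported. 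Establishing this solvability---either abstractly via a dimension count over the finitely many cosets meeting the support, or by exhibiting an explicit working combination---is the central technical step, and it is where the hypothesis on $N_G(F)$ having infinite index is essentially used.
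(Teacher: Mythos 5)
Your reduction of the problem to non-injectivity of $\rho_\tree$ on $\C G$, the coset-sum description of $\ker\rho_\tree$, and the observation that $\St_G(v\tree)$ is non-central are all fine. But the proof stops exactly where it has to start: you never actually produce a non-zero element of the kernel. Your ansatz $\xi=\sum_i c_i\hat F_i$ must sum to zero on every left coset of every vertex stabilizer $\St_G(w)$; as $w$ descends the tree these stabilizers shrink, so the partition of the fixed finite support of $\xi$ into cosets keeps refining, and you face infinitely many linear constraints on finitely many unknowns $c_i$ with no dimension count and no explicit solution. The sentence asserting that the infinite supply of conjugates ``supplies enough degrees of freedom'' is a restatement of what needs to be proved, not an argument, and it is far from clear that a purely additive combination of characteristic functions of conjugate finite subgroups can be made to work. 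Moreover, your construction presupposes a torsion element in $\St_G(v\tree)$: in the torsion-free case there are no non-central finite subgroups at all, the hypothesis on $G$ is vacuous, and your strategy (conjugates of a finite subgroup) has no analogue, so the promised ``parallel argument'' does not exist in this framework. Finally, you use the normalizer hypothesis in the opposite direction from the one that is needed.

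For comparison, the paper's proof runs as follows. Since $\St_G(v\tree)$ is non-central and is normalized by the finite-index subgroup $\St_G(L_n)$, the hypothesis on $G$ forces it to be \emph{infinite} (a finite non-central subgroup would have a finite-index normalizer). One then chooses $A_0\subseteq L_n$ of maximal cardinality such that $\St_G(L_n;A_0\tree):=\St_G(L_n)\cap\bigcap_{a\in A_0}\St_G(a\tree)$ is non-trivial, lets $A_0,\dots,A_N$ be its $G$-orbit (which covers $L_n$ by transitivity), and notes that maximality forces elements of $\St_G(L_n;A_i\tree)$ and $\St_G(L_n;A_j\tree)$ to commute for $i\neq j$, since their commutator lies in $\St_G(L_n;(A_i\cup A_j)\tree)=\{1\}$. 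The kernel element is the \emph{product} $M=\prod_{i=0}^{N}(1-g_i)$ with $g_i\in\St_G(L_n;A_i\tree)$: it is killed by $\rho_\tree$ because every vertex of $\tree$ lies either in the first $n$ levels or in some $a\tree$ with $a\in A_i$, so some commuting factor $1-g_i$ annihilates the corresponding Dirac mass; and it is non-zero for a suitable choice of the $g_i$, either because $\C\Z^{l'}$ is a domain (when some $g_i$ has infinite order) or by an inductive choice $g_{i+1}\notin\langle g_0,\dots,g_i\rangle$ in the periodic case. This multiplicative construction, together with the infiniteness of the stabilizers extracted from the normalizer hypothesis, is precisely the content your proposal is missing.
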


Here, we have to make an algebraic assumption on $G$. Indeed, the sufficient condition in Theorem \ref{theosuffisant}.\ref{theosuffisant1} is not necessary in general (see Example \ref{exnec}).\\

In the last section, we illustrate the results of the previous two. In particular, we show that for the following classes of group, any faithful and spherically transitive action on a rooted tree is l.s.f.:

\begin{enumerate} \label{ZOB}
 \item \label{ZOB1} torsion free Gromov hyperbolic groups,
 \item \label{ZOB2} uniform lattices in a connected simple real Lie group $G$ with finite center and $\R$-rank 1,
 \item \label{ZOB3} irreducible lattices in a connected semisimple real Lie group with finite centre, no compact factor and $\R$-rank $\geq 2$.
\end{enumerate}

Thus, if $G$ belongs to one of these classes, then any faithful profinite representation of $G$ weakly contains the regular. Here is an application:

\begin{coretoile}[\ref{corHTS}]
Let $\Gamma$ be non-elementary, torsion free, residually finite hyperbolic group.  Let $\mathcal S = \left( H_n\right)_n $ be a decreasing sequence of finite index subgroups of $\Gamma$ such that the representation $\ros$ is faithful. 

Let $M_F := \frac{1}{2|F|}\sum_{g\in F} g+g^{-1}$ be the Markov operator associated to a finite generating set $F$ of $\Gamma$ which does not contain 1. Then 
\[
 \left[ -\frac{1}{|F|}, \frac{\sqrt{2|F|-1}}{|F|}\right] \subset \overline{\bigcup_{n} \text{sp} \left( \lambda_{\Gamma/H_n}\left(M_F \right) \right)} .
\]
\end{coretoile}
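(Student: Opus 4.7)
The plan is to combine the weak-containment result for torsion-free non-elementary hyperbolic groups (class (\ref{ZOB1}) of the list above the corollary) with a classical estimate for the spectrum of the Markov operator $M_F$ in $\lambda$. By the tree correspondence of Section \ref{sec1}, the faithful profinite representation $\rho_{\mathcal S}$ is isomorphic to the permutational representation $\rot_{\mathcal S}$ associated to a faithful, spherically transitive action of $\Gamma$ on the rooted tree $\tree_{\mathcal S}$. Since $\Gamma$ is a non-elementary torsion-free Gromov hyperbolic group, the assertion of class (\ref{ZOB1}) tells us that this action is locally somewhere free, and the first part of Theorem \ref{theosuffisant} (based on Proposition \ref{lemsuff}) then yields $\lambda \prec \rho_{\mathcal S}$.

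By the $C^*$-algebraic reformulation of weak containment recalled in the introduction, applied to the self-adjoint element $M_F \in \C\Gamma$, we obtain
\[
\text{sp}\bigl(\lambda(M_F)\bigr) \subset \text{sp}\bigl(\rho_{\mathcal S}(M_F)\bigr) = \overline{\bigcup_n \text{sp}\bigl(\lambda_{\Gamma/H_n}(M_F)\bigr)},
\]
the equality being the standard identification of the spectrum of a bounded direct sum of self-adjoint operators as the closure of the union of the summands' spectra. It therefore suffices to prove
\[
\left[-\tfrac{1}{|F|},\,\tfrac{\sqrt{2|F|-1}}{|F|}\right] \subset \text{sp}\bigl(\lambda(M_F)\bigr).
\]

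This last inclusion is the heart of the argument and the main obstacle. The right endpoint is Kesten's universal lower bound $\|\lambda(M_F)\| \geq \sqrt{2|F|-1}/|F|$ for any group generated by $|F|$ elements (with $F \cap F^{-1} = \emptyset$); it belongs to the spectrum because $\lambda(M_F)$ is self-adjoint. The full interval is a classical fact for non-elementary torsion-free hyperbolic groups, which I would obtain as follows: for each $\theta \in [0,\pi]$, construct oscillatory test vectors $\xi_N = \tfrac{1}{\sqrt N}\sum_{k=0}^{N-1} e^{ik\theta}\delta_{g^k}$ supported on a cyclic subgroup $\langle g\rangle$ with $g \in F$ of infinite order, and show that $\langle \lambda(M_F)\xi_N,\xi_N\rangle \to \cos(\theta)/|F|$ by exploiting the malnormality of maximal cyclic subgroups in a torsion-free hyperbolic group. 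This places the interval $[-1/|F|,1/|F|]$ in the numerical range of $\lambda(M_F)$; one then invokes the known interval structure of the spectrum of $\lambda(M_F)$ in $C^*_\lambda(\Gamma)$ for such groups to both promote these numerical-range values to genuine spectral values and to interpolate up to $\sqrt{2|F|-1}/|F|$. Chaining the three inclusions yields the corollary; the technical difficulty is concentrated in the last connectedness/interpolation step, which is what prevents a purely elementary proof of this final inclusion.
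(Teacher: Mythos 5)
Your first half is exactly the paper's argument: torsion-freeness plus Proposition \ref{prophyp} make all subtree stabilizers trivial, Theorem \ref{theosuffisant}.\ref{theosuffisant1} gives $\lambda \prec \ros$, and self-adjointness identifies $\text{sp}\left(\ros(M_F)\right)$ with the closure of the union of the finite spectra. The divergence, and the problems, are in the final inclusion $\left[-\frac{1}{|F|},\frac{\sqrt{2|F|-1}}{|F|}\right]\subset \text{sp}\left(\lambda(M_F)\right)$, which the paper disposes of by citing two known results rather than reproving them.

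First, your right-endpoint reasoning has a gap: for a self-adjoint operator $T$, the bound $\norm{T}\geq c$ only guarantees a spectral value of \emph{modulus} at least $c$, which could sit at the bottom of the spectrum; it does not give $\sup \text{sp}(T)\geq \frac{\sqrt{2|F|-1}}{|F|}$, which is what the stated interval requires. The paper avoids this by invoking Proposition 5 of \cite{HPRGV93}, which asserts precisely that $\text{sp}\left(\lambda(M_F)\right)$ contains a point $\leq -\frac{1}{|F|}$ and a point $\geq \frac{\sqrt{2|F|-1}}{|F|}$ (your oscillatory test vectors do correctly handle the left endpoint, essentially reproving half of that proposition). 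Second, and more seriously, the "known interval structure of the spectrum" that you invoke to interpolate is the actual crux of the corollary, and you leave it as an unnamed black box. The mechanism is: $\Gamma$ torsion-free hyperbolic satisfies the Baum--Connes conjecture (Lafforgue, Mineyev--Yu), hence the Kadison--Kaplansky conjecture, so $\cs{\lambda}(\Gamma)$ has no nontrivial idempotents and the spectrum of every self-adjoint element is connected; connectedness plus the two endpoint bounds then yields the interval. Without identifying this ingredient (note this is the second, independent place where torsion-freeness is used), the interpolation step is a genuine gap rather than a routine citation.
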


Finally, we study the case of \emph{weakly branched} subgroups of the automorphism group $\aut$ of a \emph{regular} rooted tree $\tree$. They form an interesting and wide class of residually finite groups which provides important examples (infinite, periodic and finitely generated groups \cite{aleshin72}, finitely generated groups with intermediate growth \cite{grig84}, amenable groups not belonging to the class SG \cite{grigzukbasilica,moybasilica}, finitely generated groups with non-uniform exponential growth \cite{wilson,wilsonbis,bartnonunif}).

A direct consequence of their definition is that their action on $\tree$ is not l.s.f. Thus, the results of \S \ref{subsecLattices} imply that a weakly branched group cannot belong to one the classes (\ref{ZOB1}),(\ref{ZOB2}) or (\ref{ZOB3}) above. It seems to the author that this was not known. 

Finally, we prove the following proposition, which in particular indicates that Theorem \ref{theosuffisant}.\ref{theosuffisant3} is optimal:

\begin{propetoile}[\ref{propwb}]
 Let $G$ be a weakly branched subgroup of $\aut$. For every $n>0$, the $*$-homomorphism $\rot^{\otimes n}$ defined on $\C G$ is not faithful. In particular, the representation $\rot^{\otimes n}$ does not weakly contain the regular $\lambda$.
\end{propetoile}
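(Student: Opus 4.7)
The plan is to construct, for each $n \geq 1$, a non-zero element $x \in \C G$ with $\rot^{\otimes n}(x) = 0$. Since the left regular representation $\lambda$ is faithful on $\C G$ (one has $\lambda(x)\delta_e = x$), this gives an $f \in \ell^1(G)$ with $\lVert \rot^{\otimes n}(f) \rVert = 0 < \lVert \lambda(f) \rVert$, which by the $C^*$-algebraic reformulation of weak containment recalled in the introduction forbids $\lambda \prec \rot^{\otimes n}$.

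Because $G$ is weakly branched and $\tree$ is a regular rooted tree of branching degree at least $2$, the level-$k$ sphere $\tree^k$ has at least $n+1$ vertices for $k$ large enough; I pick $n+1$ distinct such vertices $v_1,\dots,v_{n+1}$ and for each $i$ a non-trivial $a_i \in \Ri_G(v_i)$. The subtrees $v_i\tree$ are pairwise disjoint, so the supports of the $a_i$ are disjoint too. Consequently the $a_i$ pairwise commute in $G$, each $a_i$ leaves $v_i\tree$ invariant, and $a_i$ fixes every vertex outside $v_i\tree$.

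Now set $x := \prod_{i=1}^{n+1}(a_i - 1) \in \C G$. Expanding by commutativity gives
\[
x = \sum_{S \subseteq \{1,\dots,n+1\}} (-1)^{n+1-|S|}\prod_{i \in S} a_i,
\]
and I claim the $2^{n+1}$ group elements $\prod_{i \in S} a_i$ are distinct. Indeed, any equality $\prod_{S} a_i = \prod_{T} a_i$ with $S \neq T$ can, using commutativity, be rewritten as a product of $a_j^{\pm 1}$ for $j \in S \triangle T$ equal to $1$; restricting this equality to a single $v_j\tree$ with $j \in S \triangle T$ leaves only $a_j^{\pm 1}$ acting non-trivially there, forcing $a_j = 1$, contradicting the choice of $a_j$. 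Hence $x$ is a non-trivial signed combination of distinct group elements and $x \neq 0$.

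It remains to check $\rot^{\otimes n}(x) = 0$. Identifying $\rot^{\otimes n}$ with the permutation representation of $G$ on $\ell^2((\tree^0)^n)$, it suffices to evaluate on a basis vector $\delta_{(w_1,\dots,w_n)}$. Since the $v_i\tree$ are pairwise disjoint and $n+1$ in number, the pigeonhole principle produces an index $i$ with $w_k \notin v_i\tree$ for every $k$. The key bookkeeping step is that each $a_j$ with $j \neq i$ maps $\tree^0 \setminus v_i\tree$ into itself (it preserves $v_j\tree$ and fixes its complement), so the vector $\prod_{j \neq i}\rot^{\otimes n}(a_j - 1)\delta_{(w_1,\dots,w_n)}$ is a linear combination of basis vectors $\delta_{(u_1,\dots,u_n)}$ with every $u_k \notin v_i\tree$; on each such basis vector $a_i$ acts trivially, hence $\rot^{\otimes n}(a_i - 1)$ annihilates it. As the operators $\rot^{\otimes n}(a_j - 1)$ mutually commute, placing the factor indexed by $i$ on the left yields $\rot^{\otimes n}(x)\delta_{(w_1,\dots,w_n)} = 0$, and thus $\rot^{\otimes n}(x) = 0$. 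The only delicate point is this support-tracking through an iterated product, which uses the disjointness of the rigid-stabiliser supports twice: once to guarantee commutativity and once to confirm that the intermediate image still avoids $v_i\tree$ coordinate-wise.
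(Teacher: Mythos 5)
Your argument is correct and follows essentially the same strategy as the paper's: take one nontrivial element in each of several pairwise disjoint rigid stabilizers, form the product of the corresponding $(1-a_i)$, prove non-vanishing in $\C G$ by restricting to a single subtree, and prove vanishing of $\rot^{\otimes n}$ by the pigeonhole argument on the coordinates of a tensor basis vector. The only (harmless) difference is cosmetic: you use $n+1$ vertices at a sufficiently deep level to kill $\rot^{\otimes n}$ directly, whereas the paper takes all $d^n$ vertices of the level $L_n$ to kill $\rot^{\otimes d^n-1}$ and then invokes the fact that $\rot^{\otimes k}$ is a subrepresentation of $\rot^{\otimes k+1}$; also, placing the factor $(1-a_i)$ on the right (so that it acts first and annihilates $\delta_{(w_1,\dots,w_n)}$ outright) would have spared you the support-tracking through the intermediate product.
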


\section{Locally somewhere free actions and weak containment of the regular representation}\label{preliminary}

Let $G$ be a countable group. Let $\left(X,\nu\right)$ be a metrizable, locally compact and separable space endowed with a Radon measure $\nu$, on which $G$ acts. If the action is measurable and quasi-preserves $\nu$, then it yields the following unitary representation:
\[
\begin{array}{ccc}
G  & \overset{\rho_X}{\longrightarrow}  & \mathcal U \left( L^2 \left(X\right)\right)   \\
 g  &  \longrightarrow   &  f \to \rho_X\left(g\right)\left(f\right) 
\end{array}
\]
where $\rho_X\left(g\right)\left(f\right) $ is defined by 

\[\rho_X\left(g\right)\left(f\right)\left(x\right)= \left(\frac{dg^{-1}\nu}{d\nu}\left(x\right)\right)^{1/2} f\left(g^{-1}\cdot x \right) \]
with $\frac{dg^{-1}\nu}{d\nu} $ the Radon-Nikodym derivative. 

\begin{defi}\label{deflsf}
We say that the action of $G$ on $\left(X,\nu\right)$ is \emph{locally somewhere free} (l.s.f. for short) if for every finite subset $K \subset G$, there is a Borel subset $F \subset X$ of positive measure such that the non-trivial elements of $K$ fixe almost no point in $F$:
 \[
 \forall \gamma \neq 1 \in K,\  x\in F \Rightarrow \gamma \cdot x \neq x \text{ a.e.}
 \] 
\end{defi}

\begin{prop}\label{lemsuff}
 If the action of $G$ on $X$ is l.s.f. then the regular representation $\lambda$ of $\Gamma$ is weakly contained in $\rho_X$.
\end{prop}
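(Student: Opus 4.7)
The plan is to prove the following: for every finite symmetric $K \subset G$ containing $1$, there is a unit vector $\xi_K \in L^2(X)$ with $\langle \rho_X(g) \xi_K | \xi_K \rangle = \delta_{g,1}$ for all $g \in K$. Since $\langle \lambda(g) \delta_e | \delta_e \rangle = \delta_{g,1}$ as well, this exhibits the diagonal coefficient $g \mapsto \langle \lambda(g)\delta_e|\delta_e\rangle$ as a uniform limit on finite sets of coefficients of $\rho_X$. A standard cyclicity argument (for $\xi = \sum_j c_j \delta_{h_j}$, apply the construction to a larger finite set containing all $h_j^{-1}g h_k$ with $g \in K$, and form the transported vector $\sum_j c_j \rho_X(h_j)\xi_{\tilde K}$) extends the approximation to every $\xi \in \ell^2(G)$, which yields $\lambda \prec \rho_X$ in the sense of Definition \ref{defiwc}.

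The natural candidate is $\xi_K := \nu(B)^{-1/2}\mathbf{1}_B$ with $B \subset X$ Borel, $0 < \nu(B) < \infty$, and $B \cap gB = \emptyset$ for every $g \in K \setminus \{1\}$. Since $\rho_X(g)\mathbf{1}_B = (dg^{-1}\nu/d\nu)^{1/2}\mathbf{1}_{gB}$, one obtains
\[
\langle \rho_X(g)\xi_K | \xi_K\rangle \;=\; \frac{1}{\nu(B)}\int_{B\cap gB}\Bigl(\frac{dg^{-1}\nu}{d\nu}\Bigr)^{1/2}d\nu,
\]
which equals $0$ for $g \in K\setminus\{1\}$ and $1$ for $g = 1$; the Radon--Nikodym factor is irrelevant on the empty integration domain.

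To build such a $B$, apply the l.s.f.\ hypothesis to $K$ and obtain a Borel set $F \subset X$ of positive measure on which every $g \in K \setminus \{1\}$ has no fixed points almost everywhere. Fix a compatible metric $d$ on $X$. Each displacement function $x \mapsto d(g\cdot x, x)$ is Borel (by measurability of the action and continuity of $d$) and strictly positive a.e.\ on $F$, so up to a null set
\[
F = \bigcup_n F_n, \qquad F_n := \bigl\{x \in F : d(g\cdot x, x) > 1/n \text{ for every } g \in K\setminus\{1\}\bigr\},
\]
and some $F_{n_0}$ has positive measure. Separability, metrizability and local compactness of $X$ give a countable cover of $X$ by open sets $U_i$ of diameter less than $1/(2n_0)$ with compact closure (small enough balls around a countable dense subset, shrunk to stay inside relatively compact neighborhoods); each has finite $\nu$-measure because $\nu$ is Radon. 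Some $U_i$ satisfies $\nu(F_{n_0} \cap U_i) > 0$; set $B := F_{n_0} \cap U_i$. For $x \in B$ and $g \in K \setminus \{1\}$, one has $d(g\cdot x, x) > 1/n_0$ while $d(x, y) < 1/(2n_0)$ for every $y \in B$, so the triangle inequality gives $d(g\cdot x, y) > 1/(2n_0) > 0$, hence $g\cdot x \notin B$ and $B \cap gB = \emptyset$.

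The main obstacle is precisely this passage from the pointwise (almost everywhere) fixed-point freeness of $K$ on $F$ to the set-theoretic disjointness $B \cap gB = \emptyset$; it is bridged by combining the displacement stratification of $F$ with a uniform local smallness of $B$ provided by separability and local compactness, which is the step where the topological hypotheses on $X$ enter essentially. Once this disjointness is in hand, the coefficient computation for $\xi_K$ is immediate and, reassuringly, entirely independent of the Radon--Nikodym behavior of the action.
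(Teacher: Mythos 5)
Your proposal is correct and follows essentially the same route as the paper's proof: stratify the l.s.f.\ set $F$ by displacement $>1/n$, use separability to cut out a small-diameter piece $B$ of positive finite measure with $gB\cap B=\emptyset$ for $g\in K\setminus\{1\}$, and match the diagonal coefficient of $\delta_1$ with the normalized characteristic function of $B$. The only difference is that you spell out the reduction to the cyclic vector $\delta_1$ by hand (transporting $\xi_{\tilde K}$ by the $\rho_X(h_j)$), whereas the paper simply invokes Fell's theorem on coefficients of cyclic vectors; both are fine.
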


\begin{proof}
 
Let $K$ be a finite subset of $X$ and $F$ a subset of $X$ of positive measure such that the non-trivial elements of $K$ fixe almost no point in $F$. Let us consider a distance $d$ on $X$ compatible with its topology and for each positive integer $n$, the following measurable set: 
 \[
  E_n := \left\lbrace x \in F \ | \ \forall \gamma \neq 1 \in  K, \ d(\gamma \cdot x, x) > 1/n\right\rbrace. 
 \]

By definition, $\nu\left(F \setminus \cup_n E_n \right)= 0$ and therefore, there is a positive integer $k$ such that $\nu \left( E_k\right)>0$.  Since $X$ is separable, there is ball $B\left(x,l\right)$ of radius $l<\frac{1}{4k}$ such that $U_K:=B\left(x,l\right) \cap E_k$ has non-zero measure. It satisfies
\begin{equation}\label{eqfree}
 \forall \gamma \neq 1 \text{ in } K, \ \gamma \left( U_K\right) \cap  U_K = \emptyset.
\end{equation}

Let us now prove that $\lambda \prec \rho_X$. We denote by $\delta_{1}$ the Dirac function over the identity element of $G$. This is a cyclic vector for the regular representation $\lambda$, i.e. the family $\left\lbrace \lambda\left(g\right)\left( \delta_{1}\right)= \delta_{g} \ | \ g\in G \right\rbrace $ is total in $\ell^2 \left(\Gamma \right)$. Therefore, by a result of Fell \cite{Fell63}, it is enough to check (\ref{wc}) in Definition \ref{defiwc} for $\xi=\delta_{1}$. As such, we consider $K$ a finite subset of $\Gamma$ and $U:=U_K$ the measurable subset associated to $K$ defined previously. If we let $\chi_{U} \in L^2\left(X\right) $ be the characteristic function of $U$, then (\ref{eqfree}) implies that for $g\in K$ 
\[
 \left< \lambda\left(g\right) \left(\delta_{1}\right) \ | \ \delta_1\right> = \delta_{1,g} =\left< \rho_X\left( g\right)\left(\frac{\chi_{U} }{ \nu\left(U\right)} \right)\ | \  \frac{\chi_{U} }{ \nu\left(U\right)}   \right> 
\]
where $\delta_{1,g}=1$ if $1=g$ and $0$ otherwise.
\end{proof}

\begin{exemple}
 The action of a non-elementary, torsion free Gromov hyperbolic group on its boundary $\left(\partial G,\nu\right)$ endowed with a Patterson-Sullivan measure $\nu$ fullfills all the conditions of the previous proposition (see \cite{coo93}). It is l.s.f. since each element in $G$ admits exactly two fixed points in $\partial G$, and the measure $\nu$ has no atom.   
\end{exemple}

We stress that a faithful action is not necessarily l.s.f.; the last chapter will give such examples.

\section{Rooted trees}\label{sec1}

Let $\bar{d}=d_0,d_1,\dots,d_n,\dots$ be a sequence of integers with $d_i \geq2$ for all $i$. We define the rooted tree $\tree_{\bar{d}}$ as follows: $\tree_{\bar{d}}$ is an infinite, locally finite tree endowed with the usual metric $dist$ carried by any graph, and for which:
\begin{enumerate}[1)]
 \item there is a particular vertex $\varnothing$ called the \emph{root},
 \item the degree $\textrm{deg}(v)$ of any vertex $v$ depends only on its distance to the root and is more precisely given by 

\[\textrm{deg}(\varnothing)=d_0, \text{ and for } n=dist(\varnothing,v)\geq 1, \  \textrm{deg}(v)=d_n +1. \]
\end{enumerate}

Let $\tree_{\bar{d}}^0$ denotes the set vertices of $\tree_{\bar{d}}$. If $n$ is a non-negative integer, the set of vertices whose distance to the root equals $n$ is called the $n$-th level of $\tree_{\bar{d}}$ and is denoted by $L_n$. One can give $\tree_{\bar{d}}$ a \emph{planar graph} structure as follows: for every $n$, one labels each vertex of $L_n$ by a finite sequence $i_1 i_2 \dots i_n$ with $i_j \in \left\lbrace 1,2,\dots,d_j \right\rbrace $ such that 2 vertices  $i_1 i_2 \dots i_n \in L_n$ and $i'_1 i'_2 \dots i'_{n+1} \in L_{n+1}$ are connected by an edge if and only if $i_1 i_2 \dots i_n = i'_1 i'_2 \dots i'_n $. Every level $L_n$ is then endowed with the lexicographic order.  

Given any vertex $v$, one defines the \emph{subtree} $v\tree_{\bar{d}}$ to be the connected subgraph of $\tree_{\bar{d}}$ whose vertices $w$ are \emph{descendants} of $v$ (i.e. $v \in \left[\varnothing,w \right] $ where $\left[\varnothing,w \right]$ denotes the unique geodesic path linking $\varnothing$ and $w$). 

We refer to Fig. \ref{figarbre} for a less rigourous but more visual presentation of these definitions.

\begin{figure}[ht]
\begin{center}
\begin{pspicture}(6,7)
\put(3.2,6){$\mbox{\small $\varnothing$}$}

\psline(3,6)(1.5,5) \psline(3,6)(2,5) \psline[linestyle=dotted](2.3,5)(2.7,5) \psline[linestyle=dotted](3.2,5)(4.1,5) \psline(3,6)(4.5,5) 
\put(1.38,5){$\mbox{\tiny $1$}$} \put(1.88,5){$\mbox{\tiny $2$}$} \put(4.62,5){$\mbox{\tiny $d_0$}$}

\psline(1.5,5)(0.5,4) \psline(1.5,5)(0.8,4) \psline[linestyle=dotted](1.1,4)(1.3,4) \psline(1.5,5)(1.7,4) 
\put(.30,4){$\mbox{\tiny $11$}$} \put(1.8,4){$\mbox{\tiny $1 d_1$}$}
\psline(4.5,5)(4.3,4) \psline(4.5,5)(4.6,4) \psline[linestyle=dotted](4.9,4)(5.2,4) \psline(4.5,5)(5.5,4) 
\put(3.90,4){$\mbox{\tiny $d_01$}$} \put(5.5,4){$\mbox{\tiny $d_0 d_1$}$}

\psline[linestyle=dotted](0.5,4)(-0.3,3) \psline[linestyle=dotted](5.5,4)(6.3,3) 

\put(3.5,3){$\mbox{\tiny $v$}$}
\psline(3.5,3)(3,2) \psline(3.5,3)(3.2,2) \psline[linestyle=dotted](3.5,2)(3.7,2) \psline(3.5,3)(4,2)
\psline[linestyle=dotted](3,2)(2.5,1) \psline[linestyle=dotted](4,2)(4.5,1)
\psline[linestyle=dashed](3,6)(3,5) \psline[linestyle=dashed](3,5)(3.2,4) \psline[linestyle=dashed](3.2,4)(3.5,3)

\psframe[framearc=1,linestyle=dotted](0.9,4.85)(5.1,5.15) \put(5.5,5){$\mbox{\tiny $L_1$}$}
\psframe[framearc=1,linestyle=dotted](0,3.85)(6.2,4.15) \put(6.3,4){$\mbox{\tiny $L_2$}$}

\psparabola[linestyle=dotted](2,0.5)(3.5,3.2) \put(4.5,2.5){$\mbox{\tiny $v \tree_{\bar{d}} $}$}

\end{pspicture} \end{center}\caption{Rooted tree $\tree$}\label{figarbre}  \end{figure}

A countable group $G$ is said to act on a rooted tree $\tree_{\bar{d}}$ if it acts on the underlying graph while fixing the root. Such an action induces a unitary representation $\rho_{\tree_{\bar{d}}}$ of $G$ on the Hilbert space $\ell^2 \left( \tree_{\bar{d}}^0\right)$ defined by:
\[
\forall g \in G, \ \forall \xi \in \ell^2 \left( \tree_{\bar{d}}^0\right), \ \forall v \in \tree_{\bar{d}}^0, \ \rho_{\tree_{\bar{d}}}(g)(\xi)(v)=\xi(g^{-1}\cdot v).
\]

Let us also define, for every vertex $v \in \tree_{\bar{d}}^0$ and every non-negative integer $n$ the following stabilizers:
\[
 \St_G (v):=\left\lbrace g \in G \ | \ g(v)=v\right\rbrace, \quad  \St_G (L_n):=\left\lbrace g \in G \ | \ g(v)=v, \ \forall v \in L_n \right\rbrace
\]
which both have finite index in $G$, since $G$ preserves the finite sets $L_n$. The representation $\rho_{\tree_{\bar{d}}} $ will be faithful if and only if the action of $G$ on $\tree_{\bar{d}} $ is, that is if and only if $\bigcap_{n=0}^{\infty}\St _G (L_n)$ is trivial.\\

The action of $G$ on $\tree_{\bar{d}}$ is said \emph{spherically transitive} if $G$ acts transitively on each level. In this case, one has
\[
\forall n, \ \forall v \in L_n, \ \St_G (L_n)=\bigcap_{g\in G}g\St_G (v)g^{-1} .
\]

One notices that there is a correspondence between spherically transitive actions of $G$ on a rooted tree $\tree_{\bar{d}}$ and decreasing sequences $\mathcal S=\left( H_n\right)_n $ of finite index subgroups of $G$. Moreover, this correspondence respects the representations $\rho_{\tree_{\bar{d}}} $ and $\rho_{\mathcal S} $. 

Indeed, suppose $G$ acts spherically transitively on $\tree_{\bar{d}}$ and let $\left( v_n\right)_n$ be the sequence with $v_n$ the \emph{left most} vertex $11\dots1$ of $L_n$. Then, $\mathcal S\left(\tree_{\bar{d}} \right):=\left(\St_G (v_n)\right)_n$ is a decreasing sequence of finite index subgroups of $G$ fullfilling 
\[\forall n,\ \left|\St _G (v_{n})/\St _G (v_{n+1})  \right|=d_n .\] 
Moreover, since the actions of $G$ on $L_n$ and on $G/\St_G(v_n)$ are isomorphic, the representations  $\rho_{\tree_{\bar{d}}} $ and 
 $\rho_{\mathcal S (\tree_{\bar{d}})} $ are isomorphic.\\

Conversely, let $\mathcal S=\left( H_n\right)_n $ be a decreasing sequence of finite index subgroups of $G$. Without lost of generality, one can assume that $H_0=G$ and  $\mathcal S$ is strictly decreasing. Then, we can construct a rooted tree $\tree \left(\mathcal S \right)$ as follows: 
\begin{itemize}
 \item[-] its vertices are the cosets $g_{n,k}H_n$,
 \item[-] two vertices $g_{n,k}H_n $ and $g_{n',k'}H_{n'}$ are connected by an edge if and only if $n'=n+1$ and $g_{n',k'}H_{n'} \subset  g_{n,k}H_n$.
\end{itemize}

By construction, the root of $\tree \left(\mathcal S \right)$ is represented by $G$ and, more generally, the sets $L_n $ and $ G/H_n$ are equal:  $\tree \left(\mathcal S \right)$ is actually the rooted tree $\tree_{\bar{d}}$ where $\bar{d}=d_0,d_1,\dots$ is caracterised by $d_n=\left| H_n/H_{n+1} \right|$. Moreover, it follows that $G$ acts on $L_n$ for all $n$, and thus on $\tree \left( \mathcal S \right)$. The representations $\rho_{\mathcal S}$ and $\rho_{\tree \left( \mathcal S \right)}$ are clearly isomorphic.\\

This correspondence implies for instance that a countable group $G$ acts faithfully on a rooted tree if ond only if it is residually finite. We will soon see that this picture is relevant for our purpose. More precisely, the weak containment of the regular representation $\lambda$ in $\ros$, equivalently in $\rot$, is linked to the size of the stabilizers $\St_G \left( v\tree \right)$ of subtrees defined by  
\[
\forall v \in \tree^0, \  \St_G \left( v\tree \right) := \left\lbrace g\in G \ | \ g(x)=x, \ \forall x\in v\tree\right\rbrace .
\]

Before starting the study of our problem, we recall in next section some convinient facts about the group of the automorphisms of a rooted tree.

\section{The automorphism group of a rooted tree}\label{sec2}

By $\text{Aut}\left(\tree_{\bar{d}} \right) $ we denote the group of the automorphisms of $\tree_{\bar{d}}$ that fixe the root $\varnothing$. This short section aims to recall some convenient tools to describe such automorphisms. We refer to \cite{bourbakiandrzej,livrenek,nekcstar,mathese} for more details. \\

For all $n$, the group $\text{Aut}\left(\tree_{\bar{d}} \right) $ preserves the level $L_n$ as well as the finite set $\left\lbrace v\tree_{\bar{d}} \ | \ v \in L_n\right\rbrace $ of subtrees rooted at its vertices. 

Thanks to the self-similar structure of $\tree_{\bar{d}} $, the group $\text{Aut}\left(\tree_{\bar{d}} \right) $ admits a natural decomposition in terms of the automorphisms group of other rooted trees. 

More precisely, we have just noticed that every $g$ in $\text{Aut}\left(\tree_{\bar{d}} \right)$ induces a permutation $g_1$ on the set $L_1$, as well as an isomorphism $\varphi_{g(v)}(g)$ from $v\tree_{\bar{d}}$ onto $g(v)\tree_{\bar{d}}$, for every vertex $v$ in $L_1$. 

These two subtrees are canonically isomorphic to $\tree_{\sigma (\bar{d})}$ where $\sigma$ denotes the \emph{shift} on $\R^{\N}$ (i.e. $\sigma (\bar{d})=d_1,d_{2},\dots $) and $\varphi_{g(v)}(g)$ can be seen therefore as an element of $\text{Aut}\left(\tree_{\sigma (\bar{d})} \right) $. 

It is easy to see that this data completely determines the action of $g$ on $\tree_{\bar{d}} $. In fact, we have the following decomposition:
\[\begin{array}{ccc}
\text{Aut}\left(\tree_{\bar{d}} \right)   & \overset{\Phi}{\longrightarrow}  & \left( \text{Aut}\left(\tree_{\sigma(\bar{d})} \right) \times \dots \times \text{Aut}\left(\tree_{\sigma(\bar{d})} \right) \right)  \rtimes \mathfrak{S}_{d_0}   \\
 g  &  \longrightarrow   &  ( \varphi_{1}(g), \dots, \varphi_{d_0}(g)) . g_1 
\end{array}
\]
where $\mathfrak{S}_{d_0}$ denotes the symetric group on the set of $d_0$ elements $L_1$. Its action on $\left( \text{Aut}\left(\tree_{\sigma(\bar{d})} \right) \times \dots \times \text{Aut}\left(\tree_{\sigma(\bar{d})} \right) \right) $ is the permutation of the coordinates. The isomorphism $\Phi$ is called the \emph{recursion isomorphism}. 
 
Generalizing further, we denote by $\Phi^{(n)}$ the decomposition of $\text{Aut}\left(\tree_{\bar{d}} \right) $ with respect to the level $L_n$:
\[\begin{array}{ccc}
 \text{Aut}\left(\tree_{\bar{d}} \right)  & \overset{\Phi^{(n)}}{\longrightarrow} & \left(\prod_{w \in L_n}   \text{Aut}\left(\tree_{\sigma^n(\bar{d})} \right)  \right)  \rtimes \text{Aut}\left(\rm T_{\bar{d},n}\right)   \\
 g  &  \longrightarrow   &  \left( \varphi_w \right)_{w\in L_n}  .\ g_n 
\end{array}\]
where $\rm T_{\bar{d},n} $ is the restriction of $\tree_{\bar{d}}$ to its $n$-th first levels, and where $\text{Aut}(\rm T_{\bar{d},n} )$ is the restriction of $\text{Aut}\left(\tree_{\bar{d}} \right) $ to this stable subgraph $\rm T_{\bar{d},n} $. 

We remark that an element $g\in \text{Aut}\left(\tree_{\bar{d}} \right) $ fixes the restriction $\rm T_{\bar{d},n} $ if and only if $g_n$ equals 1, which is also equivalent to $g$ fixing the $n$-th level $L_n$.   

\section{A sufficient condition}\label{sec3}

Let $G$ be a countable group and  $\mathcal S = \left( H_k\right)_k $ a strictly decreasing sequence of finite index subgroups. To simplify the notations, let $\tree$ denote the associated rooted tree $\tree \left(\mathcal S \right)$. The goal of this section is to find a condition which ensures that $\ros$, or equivalently $\rot$, weakly contains the regular representation $\lambda$ of $G$ on $\ell^2 \left( G \right)$. This is of course only possible if the representation $\ros$ is faithful, i.e. if the action on $\tree$ is faithful.

What follows deals also with the representations $\ros^{\otimes n} \simeq \rot^{\otimes n}$ that $G$ admits on 
\[\left(\bigoplus_{k=0}^{+\infty} \ell^2 \left( G/H_k\right) \right)^{\otimes n}=\left(\ell^2 \left( \tree^0 \right) \right)^{\otimes n} = \ell^2 \left( \tree^0 \times \dots \times \tree^0\right)\] 
coming from the diagonal action of $G$ on $\tree ^n$.

\begin{theo}\label{theosuffisant} Let $G$ be a countable group acting faithfully on a rooted tree $\tree$. 
\begin{enumerate}[i.]
 \item\label{theosuffisant1} If for every vertex $v\in \tree^0$, the stabilizer $\St_G (v\tree)$ is trivial, then $\lambda \prec \rot$.
 \item \label{theosuffisant2}More generally, if the set $\bigcup_{v\in \tree^0} \St_G (v\tree)$ is finite and has cardinality $n$, then $\lambda \prec \rot^{\otimes n}$. 
 \item \label{theosuffisant3}One always has  $\lambda \prec \bigoplus_{n=1}^{+\infty}\rot^{\otimes n} $.
\end{enumerate}

\end{theo}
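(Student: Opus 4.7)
The plan is to deduce all three parts from Proposition~\ref{lemsuff} applied to three different Cartesian constructions on $\tree^0$, each equipped with counting measure (which is Radon, $G$-invariant, and turns the canonical unitary representation into the one required by the statement). Concretely I would take $X=\tree^0$ for (i), $X=(\tree^0)^n$ with the diagonal $G$-action for (ii), and the countable disjoint union $X=\bigsqcup_{k\geq 1}(\tree^0)^k$ for (iii). In each case $X$ is a countable discrete space (hence metrizable, locally compact, separable), and the unitary representation $\rho_X$ on $L^2(X)$ is exactly $\rot$, $\rot^{\otimes n}$, or $\bigoplus_{k\geq 1}\rot^{\otimes k}$ respectively; what remains in each case is to verify the l.s.f. property on $X$.

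The central observation I would use is that if $g\in\aut$ satisfies $g(v)\neq v$ for some vertex $v$, then $g(w)\neq w$ for \emph{every} $w\in v\tree$. Indeed, $g$ maps $v\tree$ bijectively onto $g(v)\tree$, and two subtrees rooted at distinct vertices of the same level of $\tree$ are disjoint. This lemma lets one ``lock in'' moved vertices by restricting subsequent choices to $v\tree$.

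For (i), given $K=\{g_1,\dots,g_k\}\subset G\setminus\{1\}$, I would build a nested sequence $v_1\in \tree^0$, $v_2\in v_1\tree$, $\dots$, $v_k\in v_{k-1}\tree$, where at step $j$ the vertex $v_j$ is chosen so that $g_j(v_j)\neq v_j$; this is possible by the hypothesis $\St_G(v_{j-1}\tree)=\{1\}$ (with $v_0=\varnothing$ giving faithfulness). By the lemma $g_i$ moves every vertex of $v_i\tree$, and since $v_k\in v_i\tree$ for all $i$, the singleton $F=\{v_k\}$ witnesses the l.s.f. property on $\tree^0$. For (ii), write $H\setminus\{1\}=\{h_1,\dots,h_{n-1}\}$ and pick, by faithfulness, vertices $u_j\in\tree^0$ with $h_j(u_j)\neq u_j$; the elements of $K\setminus H$ fix no subtree pointwise by definition of $H$, so the nested argument of (i) produces a single vertex $w$ moved by all of them. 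The tuple $(u_1,\dots,u_{n-1},w)\in(\tree^0)^n$ is then moved by every $g\in K\setminus\{1\}$: those in $H$ via their reserved coordinate and those outside $H$ via the last coordinate. For (iii), faithfulness alone suffices: choose $v_i$ with $g_i(v_i)\neq v_i$, and then $(v_1,\dots,v_k)\in(\tree^0)^k\subset X$ is moved by every $g_i$, which perturbs the $i$-th coordinate.

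The step I expect to be the main obstacle is the iterative construction in (i): making a fresh choice to defeat $g_{j+1}$ could a priori reinstate the defeats of $g_1,\dots,g_j$. The subtree-disjointness lemma is what resolves it, since once $v_i$ is moved by $g_i$, every descendant of $v_i$ is also moved, so staying inside $v_i\tree$ at subsequent steps preserves the earlier achievements. This subtlety disappears in (iii), where separate coordinates handle separate elements, and in (ii) the two techniques are combined: reserved coordinates for the finitely many elements of $H$, and the nested-subtree descent for the rest.
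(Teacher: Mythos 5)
Your proposal is correct and follows essentially the same route as the paper: all three parts are reduced to Proposition~\ref{lemsuff} by verifying the l.s.f.\ property of the $G$-action on $\tree^0$, on $(\tree^0)^n$ and on $\bigsqcup_{k}(\tree^0)^k$ respectively, with counting measure. The only differences are presentational: where you run a direct nested descent using the fact that an element moving $v$ moves every vertex of $v\tree$, the paper argues by contradiction via a minimality argument on the sets $F\cap\St_G(v)$ (the two hinge on the same monotonicity of fixed-vertex sets along geodesics to the root), and in part ii.\ your bookkeeping --- one reserved coordinate per non-trivial element of $\bigcup_{v}\St_G(v\tree)$ plus one coordinate handling all remaining elements of $K$ at once --- is a mild, if anything slightly cleaner, variant of the paper's choice of one vertex $v_i$ per element of $F\setminus F_{free}$.
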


\begin{proof}[Proof of Theorem \ref{theosuffisant}]
i. Let us show that the action of $G$ on the countable set $ \tree^0$ is l.s.f. so that Proposition \ref{lemsuff} applies. If this is not the case, then there exists a finite subset $F$ of $G$ not containing 1 and such that $F\cap \St_G \left( v \right) $ is non-empty, for all $v \in \tree^0$. Let $\Upsilon$ be the map that associates to a vertex $v\in \tree^0$ this non-empty finite set $F\cap \St_G \left( v \right) $, and let $v_0 \in \tree^0$ be such that $\left| \Upsilon \left(v_0\right) \right| $ is minimal. It is clear that if a vertex $y$ is a descendant of a vertex $x$ (i.e. $y\in x\tree$), then $\Upsilon \left( y \right) \subset \Upsilon \left( x \right)$. By minimality of the cardinality of $\Upsilon \left( v_0 \right)$ and the previous remark, we have 
\[
 \forall v\in v_0 \tree, \ \Upsilon \left( v \right)= \Upsilon \left( v_0 \right)
\]
and therefore any element in $\Upsilon \left( v_0 \right)$, by construction necessarily non-trivial, fixes the subtree $v_0 \tree$. This conclusion contradicts the hypothesis of \ref{theosuffisant}.\ref{theosuffisant1}.
 
ii. Once again, we want to use Proposition \ref{lemsuff} so suppose that $F$ is a finite subset of $G$ not containing 1. Let $F_{free}$ be the subset of $F$ consisting of its elements which do not fix any subtree: 
\[
 F_{free} = F \setminus  \left( F \cap \bigcup_{v\in \tree^0} \St_G (v\tree) \right).
\]
We set
\[
 \left\lbrace h_1,h_2,\dots,h_k \right\rbrace    = F \setminus F_{free}.
\]
For each $h_i$, there is an element $v_i \in \tree^0$ such that 
\[\St_G (v_i) \bigcap \left(F_{free} \cup \left\lbrace h_i\right\rbrace  \right)=\emptyset.\]  
Indeed, let $w\in \tree^0$ such that $h_i(w)\neq w$: by the definitions of $w$ and $F_{free}$, no subtree of $w\tree^0$ can be fixed by an element of $F_{free} \cup \left\lbrace h_i\right\rbrace $. We thus can apply the method of \ref{theosuffisant1}. to prove the existence of such a $v_i$ in the subtree $w\tree^0$. \\

Now, we have an element $\left( v_1,\dots, v_k \right) \in \left( \tree^0\right)^k$ such that
\[
 F \cap \St_G \left( \left\lbrace \left( v_1,\dots, v_k \right) \right\rbrace \right)=F \cap \bigcap_{i=1}^{k} \St_G \left( v_i\right)=\emptyset
\]
where \[
       k= \left|F \setminus F_{free} \right| =\left| F \cap \bigcup_{v\in \tree^0} \St_G (v\tree)\right| \leq \left|\bigcup_{v\in \tree^0} \St_G (v\tree)\right|=n.
      \]

Completing the sequence $ \left( v_1,\dots, v_k \right) $ by any vertices $v_{k+1}, \dots, v_n$, we get an element in $\left( \tree^0\right)^n $ whose stabilizer in $G$ does not intersect $F$. Therefore, the action on $\left(\tree^0\right)^n$ is l.s.f. and Proposition \ref{lemsuff} concludes.   

iii. It is clear that for an action $G$ on $X$, the action of $G$ on $\bigsqcup_{n\in \N} X^n$ is l.s.f. as soon as the one on $X$ is faithful. But $\bigoplus_{n=1}^{+\infty}\rot^{\otimes n} = \rho_{\bigsqcup_{n\in \N} \left(\tree^0\right)^n}$ so that Proposition \ref{lemsuff} applies again.  
\end{proof}

\begin{remarque}[on the l.s.f. condition for actions on rooted trees]\label{rmqNormal}
 The first part of the proof shows that the triviality of all the stabilizers $\St_G \left(v\tree\right)$ implies that the action is l.s.f. If the action is spherically transitive, then the converse holds. Besides, it is easy to see that if the stabilizer of an infinite geodesic path in $\tree$ is trivial, then the action is l.s.f. In particular, if $\mathcal S = \left( H_n\right)_n $ is a decreasing sequence of finite index subgroups of $G$ with \underline{trivial intersection}, then the action on the associated rooted tree $\tree_{\mathcal S}$ is l.s.f. and the representation $\rho_{\mathcal S}$ weakly contains the regular. However, this condition is not necessary in general for the action to be l.s.f. (see for instance the realization of the \emph{lamplighter group} $\left(\oplus_{n\in \Z} \Z/2\Z\right)\rtimes \Z $ as an \emph{automaton group} in \cite{grigzukrev}). 
\end{remarque}

\begin{remarque}\label{rmqdebile}
 Notice that, for every positive integer $n$, $\ros^{\otimes n}$ is a subrepresentation of $\ros^{\otimes n+1}$. Indeed, the root of $\tree$ is fixed by $G$, therefore there are invariant vectors under $G$ in $\ell^2 \left( \tree^0 \right) $ and so the trivial representation $\epsilon$ is a subrepresentation of $\ros$. It is thus not a surprise that the sufficient condition in \ref{theosuffisant}.\ref{theosuffisant2}  is weaker than the one in \ref{theosuffisant}.\ref{theosuffisant1}. In \cite{mathese} we give, for every positive integer $n$, some concrete examples of pair $\left(G,\mathcal S\right)$ for which  $\ros^{\otimes 2^n}$ weakly contains $\lambda$, but $\ros^{\otimes 2^n -1}$ does not.  
\end{remarque}

\begin{remarque}
The statement \ref{theosuffisant}.\ref{theosuffisant3} can be compared with the well known fact that if $G$ is a \underline{finite} group and $\rho$ is a faithful representation of $G$, then there is a positive integer $N$ such that every irreducible representation of $G$ appears in $\rho^{\otimes N}$. Here we only get that the regular representation \footnote{which weakly contains every unitary representation of $G$ if and only if $G$ is amenable.} is weakly contained in the sum of all the  $\ros^{\otimes n}$. But as we will see in the last section, there are examples of pair $\left(G,\mathcal S\right)$ for which $\lambda$ is not weakly contained in $\ros^{\otimes n}$ for all $n$, and so the statement of \ref{theosuffisant}.\ref{theosuffisant3} is optimal. 
\end{remarque}

\section{A necessary condition}\label{sec4}

In this section, we study the inverse implication of Theorem \ref{theosuffisant}.\ref{theosuffisant1}. It turns out to be true only under an algebraic assumption on $G$. After the proof of Theorem \ref{theonec}, we will give an example (see \ref{exnec}) of an action of a group on a rooted tree which shows that the sufficient condition of Theorem \ref{theosuffisant}.\ref{theosuffisant1} is not necessary in general and explains the additional assumption in Theorem \ref{theonec} below.

\begin{theo}\label{theonec}
Let $G$ be a countable group in which the normalizer $N_G \left(H \right)$ of any non-central finite group $H$ has infinite index in $G$.

 Suppose that $G$ acts spherically transitively on a rooted tree $\tree$. If there exists a subtree $v\tree$ whose stabilizer $\St_G \left( v\tree\right)$ in $G$ is not trivial, then the $*$-homomorphism $\rho_\tree$ defined on $\C G$ is not faithful.   
\end{theo}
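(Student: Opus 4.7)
The goal is to produce a nonzero $f\in\C G$ with $\rho_\tree(f)=0$; since $\lambda$ is always faithful on $\C G$ (one has $\lambda(f)\delta_1\neq 0$ whenever $f\neq 0$), such an $f$ gives $\|\lambda(f)\|>0=\|\rho_\tree(f)\|$, and the $\ell^1$--characterisation of weak containment recalled in the introduction yields $\lambda\nprec\rho_{\mathcal S}$ at once. A routine reduction allows the assumption that $G$ acts faithfully on $\tree$, for otherwise $\ker\rho_\tree$ already contains $g-1$ for any $g$ in the kernel of the action.

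My plan is first to locate a non-trivial \emph{finite} subgroup $H\leq\St_G(v\tree)$. Given $h\in\St_G(v\tree)\setminus\{1\}$, its restriction to each finite level $L_m$ is a permutation of some finite order $k_m$, and the sequence of powers $h^{k_m}$ lies in $\St_G(L_m)\cap\St_G(v\tree)$; a compactness/pigeonhole argument on the images of these elements in the finite quotients $G/\St_G(L_m)$ yields a non-trivial torsion element inside $\St_G(v\tree)$, and $H$ is taken to be the finite cyclic group it generates. Next one shows $H$ is non-central in $G$: a central $x\in H\setminus\{1\}$ would satisfy $x=gxg^{-1}\in g\St_G(v\tree)g^{-1}=\St_G(gv\tree)$ for every $g\in G$, so by spherical transitivity $x$ would fix pointwise every level-$n$ subtree (with $n$ the depth of $v$), hence every vertex of depth $\geq n$; faithfulness together with $\bigcap_m\St_G(L_m)=\{1\}$ then forces $x=1$, a contradiction.

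The hypothesis now gives $[G:N_G(H)]=\infty$, so $H$ admits infinitely many $G$-conjugates. But each $gHg^{-1}$ sits inside $\St_G(gv\tree)$, the stabiliser of a level-$n$ subtree, and $L_n$ is \emph{finite}; the pigeonhole principle therefore supplies two distinct conjugates $H_1\neq H_2$ both contained in $\St_G(w\tree)$ for some $w\in L_n$. Consider the averaging elements $\Sigma_i:=\sum_{y\in H_i}y\in\C G$: because each $H_i$ fixes $w\tree$ pointwise, $\rho_\tree(\Sigma_i)$ acts as the scalar $|H|$ on $\ell^2(w\tree^0)$, so $\rho_\tree(\Sigma_1-\Sigma_2)$ vanishes on that subspace. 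Transporting $\Sigma_1-\Sigma_2$ by coset representatives $g_{w'}\in G$ carrying $w$ to every other level-$n$ vertex (available by spherical transitivity) and summing produces an element of $\C G$ whose image under $\rho_\tree$ annihilates $\bigoplus_{w'\in L_n}\ell^2(w'\tree^0)$, i.e.\ every vertex of depth $\geq n$; a final finite-support correction handles the finitely many ancestors of depth $<n$ to yield the desired $f\in\ker\rho_\tree$, which is non-zero precisely because $H_1\neq H_2$. The principal obstacle lies in the finite-subgroup extraction of the first paragraph---when the original $h$ has infinite order in $G$, the compactness step must be executed carefully so that the resulting torsion element lies genuinely in $G$ and not merely in its closure in $\text{Aut}(\tree)$---and the normaliser hypothesis is indispensable for producing two distinct conjugates inside the same $\St_G(w\tree)$; without it, the pigeonhole input fails and the whole symmetrisation collapses, explaining why the additional algebraic assumption is required.
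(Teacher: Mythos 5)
Your proposal has two genuine gaps, and the first one is fatal. You begin by extracting a non-trivial \emph{finite} subgroup $H\leq \St_G(v\tree)$, but there is no reason such a subgroup exists: $\St_G(v\tree)$ may well be torsion-free. Your compactness/pigeonhole argument on the finite quotients $G/\St_G(L_m)$ produces at best a torsion element of the closure of $\left\langle h\right\rangle$ in the profinite group $\text{Aut}(\tree)$, and, as you yourself flag, there is no way to force that element back into $G$; this is not a technicality to be "executed carefully" but an essential obstruction. Relatedly, you use the normalizer hypothesis in the wrong direction. In the paper it is applied to the subtree stabilizer itself: $\St_G(v\tree)$ is non-central (its $G$-conjugates intersect trivially by spherical transitivity) and is normalized by the finite-index subgroup $\St_G(L_n)$, so the hypothesis forces it to be \emph{infinite}. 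That infiniteness is exactly what is needed to choose, for each set $A_i$ in the orbit of a maximal $A_0$ with $\St_G(L_n;A_0\tree)\neq\{1\}$, commuting elements $g_i$ with $\prod_i(1-g_i)\neq 0$ in $\C G$ (Lemma \ref{lem2}, split according to whether these stabilizers contain an infinite-order element or are infinite periodic). Your route, which manufactures infinitely many conjugates of a finite subgroup, never supplies this non-vanishing input.

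The second gap is in the construction of the kernel element. Summing operators each of which vanishes on one subspace $\ell^2(w'\tree^0)$ does not give an operator vanishing on $\bigoplus_{w'\in L_n}\ell^2(w'\tree^0)$; you would need a product, and for a product of the operators $\rho_\tree\left(g_{w'}(\Sigma_1-\Sigma_2)g_{w'}^{-1}\right)$ to vanish on that direct sum, each factor must preserve every summand. Elements of $\St_G(gv\tree)$ fix $gv\tree$ pointwise but may permute the remaining level-$n$ subtrees, so the summands are not invariant under your factors. This is precisely why the paper works inside $\St_G\left(L_n;A\tree\right)=\St_G(L_n)\cap\bigcap_{a\in A}\St_G(a\tree)$: fixing $L_n$ pointwise makes every $\ell^2(w\tree^0)$ invariant, disposes of the finitely many ancestors of depth $<n$ for free (so no "finite-support correction" is needed --- nor is one available, since an element of $\C G$ acts $G$-equivariantly and cannot be adjusted on finitely many vertices), and the maximality of $|A_0|$ yields the commutation relations that make the non-vanishing argument go through.
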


\begin{remarque}
 The conclusion of \ref{theonec} clearly implies that the representation $\rho_\tree$ does not weakly contain the regular representation $\lambda$ of $G$; indeed $\lambda$ always defines a faithful representation of $\C G$. However, these two conclusions are in general not equivalent. Indeed, the regular representation of $ \Z$ extended linearly to $\C \Z$ is, via Fourier transform, given by multiplication on $L^2 \left( S^1 \right) $ by functions $S^1 \ni z \rightarrow \sum_{i=-l}^l \alpha_i z^i$ with $\alpha_i \in \C$. If one restricts this representation to $\mathcal C \left( \mathcal A \right) $ where $\mathcal A$ is a infinite closed strict subset of $S^1$, one gets a representation which induces a faithful $*$-homomorphism of $\C \Z$ (because a non-zero function $ z \rightarrow \sum_{i=-l}^l \alpha_i z^i$ has finitely many zeros) and which cannot weakly contain the regular (because there are some functions  $ f=z \rightarrow \sum_{i=-l}^l \alpha_i z^i$ such that $\normsup{f}$ is only 
reached in $S^1 \setminus \mathcal A$). 
\end{remarque}

\begin{proof}[Proof of Theorem \ref{theonec}]
 Let $v$ be a vertex of the $n$-th level $L_n$ of $\tree$ such that $\St_G \left( v\tree\right) $ is not trivial. First note that such a group is not normal, thus non-central in $G$. Indeed, the $G$-action on the $n$-th level $L_n$ is transitive, hence 
\[\bigcap_{g\in G} g^{-1}\St_G \left( v\tree\right)g=\bigcap_{g\in G} \St_G \left( g\left(v\right) \tree\right) =\left\lbrace 1\right\rbrace.\] 

Moreover, this subgroup is normalized by the stabilizer $\St_G (L_n)$ of the $n$-th level. The later having finite index in $G$, our additional assumption on $G$ implies that $\St_G \left( v\tree\right) $ is an infinite group. Therefore, it has a non-trivial intersection with the finite index subgroup $\St_G (L_n)$. 

Now, let us define for every subset $A$ of $L_n $, the subgroup $\St_G \left( L_n ; A \tree \right) $ of $G$ of the elements which fixe $L_n$ as well as all the subtrees rooted at a vertex in $A$:
\[
\St_G \left( L_n ; A \tree \right) := \St_G \left( L_n \right) \cap \bigcap_{a \in A} \St_G \left( a \tree \right) . 
\]

The first paragraph of the proof concludes that $\St_G \left( L_n ; \left\lbrace  v\right\rbrace \tree \right) $ is not trivial. So let $A_0$ be a subset of $L_n$ such that $\St_G \left( L_n ; A_0 \tree \right) $  is non-trivial, and such that among the subsets of $L_n$ with this property, the cardinality of $A_0$ is maximal. Denote by $\left( A_0,A_1,\dots,A_N \right) $ the orbit of $A_0$ under the $G$-action. One has:
\begin{proper}
 \begin{enumerate}[(A)]
 \item\label{P1} $\bigcup_{i=0}^{N} A_i = L_n$ because $G$ acts transitively on $L_n$.
 \item\label{P2} $\forall i=1\dots N,\ \exists \alpha_i \in G $ 
$$ \St_G (L_n,A_{i} \tree)=\St_G (L_n,\alpha_i (A_{0}) \tree)=\alpha_i \St_G (L_n,A_0 \tree) \alpha_{i}^{-1} \neq \{1\}. $$ 

 \item\label{P3} For every $i \neq j$, $g_i \in \St_G \left( L_n ; A_i \tree \right)$  and $g_j \in \St_G \left( L_n ; A_j \tree \right)$, one has 
 \[
\left[ g_i,g_j\right]:=g_ig_jg_i^{-1}g_j^{-1} \in  \St_G \left( L_n ; (A_i \cup A_j) \tree \right) =\left\lbrace 1 \right\rbrace .   
 \]
Therefore every element in $\St_G \left( L_n ; A_i \tree \right) $ commutes with every element in $\St_G \left( L_n ; A_j \tree \right) $.
\end{enumerate}
\end{proper}

The properties (\ref{P1}) and (\ref{P2}) are clear. Let us check (\ref{P3}).
If we write down the decomposition of $g_i$ and $g_j$ with respect to the level $L_n$ (see Section \ref{sec2}), we get
\begin{eqnarray*}
\Phi^{(n)} \left(g_i \right)&=& \left( *,\dots,*,1,*,\dots,1,\dots \right)   \\
\Phi^{(n)} \left(g_j \right)&=& \left( *,\dots,1,1,*,\dots,*,\dots \right)
\end{eqnarray*}
where the 1s appear respectively in the positions corresponding to $ x\in A_i$ and $ y\in A_j$. Therefore
\begin{eqnarray*}
 \Phi^{(n)} \left( \left[g_i,g_j \right] \right)&=& \left( *,\dots,1,1,*,\dots,1,\dots \right)
\end{eqnarray*}
where the 1s appear in the positions corresponding to $ z\in A_i \cup A_j$. Thus, we conclude that $\left[g_i,g_j \right] $ is an element of the group 
\[ \St_G \left( L_n ; \left( A_i \cup A_j\right) \tree \right) \]
which is by construction trivial since $|A_i \cup A_j|>|A_0| $.  

Now, for all sequences $g_0,g_1,\dots,g_N$ with $g_i \in \St_G \left( L_n ; A_i \tree \right) $, we define 
\[
 M(g_0,g_1,\dots,g_N)=\prod_{i=0}^{N} \left( 1-g_i \right) \in \C G .
\]

The following two lemmas clearly imply Theorem \ref{theonec}.\end{proof}

\begin{lem}\label{lem1}
For every sequence  $g_0,g_1,\dots,g_N$ with $g_i \in \St_G \left( L_n ; A_i \tree \right) $, 
\[\rho_{\tree} \left(M(g_0,g_1,\dots,g_N) \right)=0 .\]
\end{lem}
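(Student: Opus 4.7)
The plan is to show that the operator $\rho_{\tree}(M(g_0,\dots,g_N))$ annihilates every basis vector $\delta_w$ for $w\in \tree^0$, which suffices since these vectors span a dense subspace of $\ell^2(\tree^0)$. I would exploit two things from the setup: the commutativity property (\ref{P3}), which allows the factors $(1-\rho_\tree(g_i))$ to be reordered freely, and the covering property (\ref{P1}), which will let me locate, for each $w$, an index $i_0$ such that $g_{i_0}$ fixes $w$.

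More precisely, I would first observe that the $g_i$ pairwise commute by (\ref{P3}), hence so do the $\rho_\tree(g_i)$, and for any permutation $\sigma$ of $\{0,\dots,N\}$ one has
\[
\rho_\tree\bigl(M(g_0,\dots,g_N)\bigr)=\prod_{i=0}^{N}\bigl(1-\rho_\tree(g_i)\bigr)=\left(\prod_{i\neq i_0}\bigl(1-\rho_\tree(g_i)\bigr)\right)\bigl(1-\rho_\tree(g_{i_0})\bigr).
\]
Thus it is enough to find, for each $w$, some index $i_0$ with $g_{i_0}\cdot w=w$, since then $(1-\rho_\tree(g_{i_0}))\delta_w=0$ and the reordered product vanishes on $\delta_w$.

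To locate $i_0$, I would split into two cases according to the level of $w$. If $w$ lies at level strictly greater than $n$, let $v$ be its unique ancestor in $L_n$; by (\ref{P1}) there is some $i_0$ with $v\in A_{i_0}$, and then $g_{i_0}\in\St_G(L_n;A_{i_0}\tree)$ fixes $v\tree$ pointwise, in particular $g_{i_0}\cdot w=w$. If instead $w$ is at level $\leq n$, then any $i_0$ works: since $g_{i_0}$ fixes $L_n$, it fixes every vertex of the geodesic from the root to any vertex of $L_n$ (a standard fact about tree automorphisms preserving both endpoints of a geodesic and distances along it), so every vertex of level $\leq n$ is fixed by $g_{i_0}$.

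I do not anticipate a serious obstacle here: once (\ref{P1})--(\ref{P3}) are in hand, the argument is essentially the observation that the product of the $(1-g_i)$ projects away, on each orbit piece of $\tree^0$, onto a subtree that nobody can sit under simultaneously. The only mildly delicate point is the pointwise rather than setwise fixing of $L_n$ and the vertices above it, which is why I would record the tree-automorphism observation explicitly before invoking the commutativity reordering.
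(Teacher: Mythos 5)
Your proof is correct and follows essentially the same route as the paper's: both arguments reduce to evaluating the operator on each basis vector $\delta_w$ and use Property (\ref{P1}) to produce an index $i_0$ with $g_{i_0}\cdot w=w$, so that the factor $1-\rho_\tree(g_{i_0})$ kills $\delta_w$. The only cosmetic difference is that you justify isolating that factor via the commutativity from Property (\ref{P3}), whereas the paper instead notes that each factor preserves the subspace of functions supported on $\bigcup_{k=0}^{n}L_k\cup\bigcup_{a\in A_{i_0}}a\tree$, on which $\rho_\tree(1-g_{i_0})$ vanishes; both justifications are valid.
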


\begin{proof}[Proof of Lemma \ref{lem1}]
As we have already seen, the decomposition of the element $g_i \in \text{Aut} \left( \tree \right)$ with respect to the $n$-th level $ L_n$ is given by
\[
 \Phi^{(n)} (g_i)= \left( *,\dots,*,1,*,\dots,1,\dots \right)
\]
where the 1s appear in the positions corresponding to $w\in E_i$. Thus, the operator $\rho_{\tree}\left( 1-g_i \right)$ restricts to 0 on the invariant subspace 
\[\left\lbrace f \in \ell^2 \left( \tree^0\right) \ | \ \text{supp}(f) \subset \bigcup_{k=0}^n L_k \cup \bigcup_{w\in E_i} w \tree\right\rbrace. \]

Because $\bigcup_{i=0}^{N} E_i = L_n $ (Property \ref{P1}), $\rho_{\tree} \left(M(g_0,g_1,\dots,g_N) \right)=0 $. 
\end{proof}

\begin{lem}\label{lem2}
There exists a sequence  $g_0,g_1,\dots,g_N$ with $g_i \in \St_G \left( L_n ; A_i \tree \right) $ such that 
\[M(g_0,g_1,\dots,g_N) \neq 0 .\]
\end{lem}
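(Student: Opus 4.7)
The plan is to construct $g_0, g_1, \dots, g_N$ inductively, guaranteeing that the running product $P_m := \prod_{i=0}^{m-1}(1-g_i)$ is nonzero in $\C G$ at every stage. Since the subgroups $\St_G(L_n; A_i\tree)$ pairwise commute element-wise by Property~(\ref{P3}), the chosen $g_i$'s commute, so $P_{N+1}$ will equal $M(g_0,\dots,g_N)$ regardless of the order of the factors.

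The main obstacle, which I would settle first, is to show that each $K_m := \St_G(L_n; A_m\tree)$ is infinite. Because $\St_G(L_n)$ fixes $L_n$ pointwise, conjugation by any of its elements sends each $\St_G(v\tree)$ to itself for $v \in A_0$, hence $\St_G(L_n)$ normalizes $K_0$; since $\St_G(L_n)$ has finite index in $G$, so does $N_G(K_0)$, and the standing hypothesis on $G$ then forces $K_0$ to be either central or infinite. To rule out centrality I would observe that if $K_0$ were central then every conjugate $K_i = \alpha_i K_0 \alpha_i^{-1}$ would coincide with $K_0$, giving $K_0 \subseteq \bigcap_i K_i$. This intersection equals $\St_G(L_n) \cap \bigcap_{v \in \bigcup_i A_i}\St_G(v\tree) = \St_G(L_n; L_n\tree)$ by Property~(\ref{P1}); but an element of this last group fixes $L_n$ pointwise, hence fixes every ancestor of a vertex of $L_n$ by the uniqueness of geodesics in a rooted tree, and also fixes every subtree rooted at $L_n$ pointwise, so it fixes all of $\tree$ and must be trivial by faithfulness. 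This contradicts $K_0 \neq \{1\}$, so $K_0$ is non-central, and therefore infinite; every $K_i$, being a conjugate of $K_0$, is likewise infinite.

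With this in hand the induction runs smoothly. Suppose $P_m \neq 0$ with finite support $S_m \subseteq G$. The right-stabilizer $H_m := \{g \in G : P_m g = P_m\}$ is a subgroup of $G$ that preserves $S_m$ setwise under the right-translation action $x \mapsto xg$; since this action of $G$ on itself is free, every $H_m$-orbit in $S_m$ has cardinality $|H_m|$, forcing $|H_m| \leq |S_m| < \infty$. As $K_m$ is infinite, it is not contained in $H_m$, so I would pick any $g_m \in K_m \setminus H_m$. For such a choice, $P_m g_m \neq P_m$, whence $P_{m+1} = P_m - P_m g_m \neq 0$, and the induction continues. After $N+1$ steps, we obtain the sequence $g_0, \dots, g_N$ with $g_i \in K_i$ and $M(g_0, \dots, g_N) = P_{N+1} \neq 0$, which is the content of Lemma~\ref{lem2}.
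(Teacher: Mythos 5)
Your proof is correct, but the key non-vanishing step is handled by a genuinely different (and cleaner) mechanism than the paper's. The paper first secures, as you do, that each $\St_G\left(L_n;A_i\tree\right)$ is infinite via the normalizer hypothesis, and then splits into two cases: if some $g_0$ has infinite order it takes commuting infinite-order conjugates, passes to the torsion-free part $\Z^{l'}$ of the abelian group they generate, and invokes the fact that $\C\left[\Z^{l'}\right]$ is a domain; if the groups are all periodic it builds $g_{i+1}$ outside the finite group $\left\langle g_0,\dots,g_i\right\rangle$ and rules out a relation $1=g_{i_1}\cdots g_{i_l}$ upon expanding the product. Both branches lean on the element-wise commutation of Property (C). Your inductive argument replaces all of this with the single observation that for a nonzero $P_m\in\C G$ with finite support $S_m$, the right-stabilizer $H_m=\left\lbrace g : P_m g=P_m\right\rbrace$ is a subgroup acting freely on $S_m$ by right translation, hence has order at most $\left|S_m\right|$; infiniteness of $\St_G\left(L_n;A_m\tree\right)$ then yields $g_m$ with $P_m\left(1-g_m\right)\neq 0$. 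This is uniform (no case split on torsion), does not use Property (C) at all, and in fact proves the more general statement that $\prod_i\left(1-g_i\right)$ can be made nonzero whenever the $g_i$ range over arbitrary infinite subgroups. Your preliminary paragraph establishing that the $K_m$ are infinite matches the paper's reasoning (normalized by the finite-index subgroup $\St_G\left(L_n\right)$, plus non-centrality from the triviality of $\bigcap_i K_i$), and your appeal to faithfulness there is legitimate since the theorem's conclusion is trivial for a non-faithful action.
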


\begin{proof}[Proof of Lemma \ref{lem2}]
The group $\St_G \left( L_n ; A_i \tree \right) $ is non-central in $G$ and is normalized by the finite index subgroup $\St_G \left( L_n \right) $. Hence, it is infinite by assumption. Two cases arise:

\underline{1st case}: there exists an element $g_0 \in \St_G \left( L_n ; A_0 \tree \right)$ whose order is infinite. The groups $\St_G \left( L_n ; A_i \tree \right) $ being conjugated (Property \ref{P2}), we can choose $g_i \in \St_G \left( L_n ; A_i \tree \right)$ of infinite order. These elements commute (Property \ref{P3}) and thus generate a group of the form $\Z^l \oplus K $ where $K$ is a finite abelian group. If $m$ is the cardinality of $K$, we have
\[
 H \overset{\textrm{def}}{:=} \left\langle g_0 ^m, \dots ,g_N ^m \right\rangle = \Z^{l'}
\]
with $l' \neq 0$ because, for instance, $g_0 ^m$ has infinite order. It is well known that the algebra $\C H$ is a domain, thus $M(g_0 ^m,g_1 ^m,\dots,g_N ^m) =\prod_{i=0}^{N} \left( 1-g_i ^m \right) \neq 0 $. 

\underline{2nd case}: the groups $\St_G \left( L_n ; A_i \tree \right)$ are periodic (and infinite). Let us construct inductively a sequence $g_0,g_1,\dots,g_N $ such that:
\begin{itemize}
 \item[-] for all $i$, $g_i$ is a non trivial element of $\St_G \left( L_n ; A_i \tree \right)$, 
 \item[-] for all $i$, $g_i \notin \left\langle g_0,\dots, g_{i-1} \right\rangle.$ 
\end{itemize}

The initial step of the induction is clear. Suppose $g_0,\dots g_i$ are already constructed, with $i<N$. The group $K=\left\langle g_0 , \dots ,g_i \right\rangle $ is generated by torsion elements which commute (Property \ref{P3}): $K$ is a finite group and thus we can choose $g_{i+1}$ in the infinite group $\St_G \left( L_n ; A_{i+1} \tree \right) $ which is not in $K$.

Now $M(g_0 , \dots , g_N )=\prod_{i=0}^{N} \left( 1-g_i  \right) \neq 0 $. Indeed, expanding this product, one sees that its nullity implies a relation of the form
\[
 1=g_{i_1}g_{i_2} \dots g_{i_l} \textrm{ with } i_1<i_2< \dots <i_l \textrm{ and } l \textrm{ odd}.
\]

In particular it would follow that $g_{i_l} \in \left\langle g_{i_1},\dots, g_{i_{l-1}} \right\rangle $ which is by construction impossible.
\end{proof}

In the next example, we construct a group $G$ together with a spherically transitive action on a rooted tree $\tree $ with non-trivial stabilizers of subtrees. These stabilizers are finite, non-central and normalized by a finite index subgroup of $G$. Thus, $G$ is excluded from the framework of Theorem \ref{theonec} and in fact, we will prove that the profinite representation of $G$ on $\ell^2 \left( \tree^0 \right) $ weakly contains the regular $\lambda$.

\begin{exemple}\label{exnec}
 On the finite set $X=\left\lbrace 1,2,3,4,5,6\right\rbrace$ (which will play the role of the first level $L_1$ of $\tree$), consider the permutations 
\[
\alpha= \left(1,3,5 \right)\left(2,4,6\right), \quad \beta_r= \left(1,2 \right)\left(3,4\right).
\] 

Also let $H$ be the group generated by $\alpha$ and $\beta_r$; we denote by $\rho_1$ the permutational representation that $H$ admits on $\ell^2 \left( X \right) \simeq \C^6$ via its action on $X$. 

\begin{proper} \label{col}
\begin{enumerate}[(A)]
  \item The group $K$ generated by $\beta_r$, $\beta_l:=\alpha \beta \alpha^{-1}$ and $\beta_m:=\alpha^2 \beta \alpha^{-2}$ is a non-central normal subgroup in $H$ and 
\[
H = K \rtimes \left\langle \alpha \right\rangle = \left(\Z/2\Z \oplus\Z/2\Z \right)\rtimes \Z/3\Z. 
\]
 \item The representation $\rho_1$ extended linearly to $\C H$ defines a \underline{faithful} $*$-homomorphism into $\mathcal B \left( \ell^2 \left( X \right) \right) \simeq \text{End} \left( \C^6 \right)$. \label{cool}
\end{enumerate}
\end{proper}

\begin{proof}
(A) One has 
\[
\beta_l=\alpha \beta \alpha^{-1}= \left( 3,4\right) \left( 5,6\right) \text{ and }
\beta_m=\alpha^2 \beta \alpha^{-2}=\left( 1,2\right) \left( 5,6\right) 
\]
so that $\beta \alpha \beta \alpha^{-1}=\alpha^2 \beta \alpha^{-2}$. Therefore, the group $\left\langle\beta,\alpha \beta \alpha^{-1},\alpha^2 \beta \alpha^{-2} \right\rangle $ is $\Z/2\Z \oplus\Z/2\Z $ and is normal in $H$. Thus, 
\[
H=\left\langle\beta,\alpha \beta \alpha^{-1},\alpha^2 \beta \alpha^{-2} \right\rangle \rtimes \left\langle \alpha \right\rangle = \left(\Z/2\Z \oplus\Z/2\Z \right)\rtimes \Z/3\Z.
\]

(B) We want to prove that every irreducible representation of $H$ appears in $\rho_1$. Let us first compute the character $\tau$ of the representation $\rho_1$. Here, the map $\tau$ sends a permutation in $H$ to the cardinality of its fixed points set:

\[
 \tau \left( h\right) = \left\{ \begin{array}{ll}
6 & \text{if $h=1$}\\
2 & \text{if $h=\beta_l,\beta_m$ or $\beta_r$}\\
0 & \text{otherwise.}
\end{array} \right.
\] 

Let $\pi$ be an irreducible representation of $H$ whose character is denoted by $\psi$. We want to prove that $\left\langle\tau,\psi \right\rangle $ is non zero, i.e.
\[
  \frac{1}{|H|} \sum_{h \in H} \tau\left( h \right)\psi\left( h \right) \neq 0.
\]

Thanks to the above computation, this is equivalent to 
\[6 \psi\left( 1\right) + 2\left( \psi\left( \beta_r\right)+\psi\left( \beta_m\right)+\psi\left( \beta_l\right) \right)  \neq 0.\] 

Since the $\beta_*$ are conjugate in $G$, their image under $\pi$ have the same trace. Thus, we want to show (whatever $*$ is) that one of the following three equivalent statements is true: 
\begin{eqnarray*}
 6 \left( \psi\left( 1\right) +\psi\left( \beta_*\right) \right)\neq 0 &\Longleftrightarrow & \dim \pi +\psi\left( \beta_*\right)\neq 0 \\
                                                                 &\Longleftrightarrow & \pi\left( \beta_*\right) \neq -1.
\end{eqnarray*}
Now the last assertion is clear because $\beta_r \beta_l=\beta_m$ and it is impossible for the $\pi \left(\beta_* \right)$'s to all equal $-1$. 
\end{proof}

Next consider the action of the group $\Z$ on $\tree_{\tilde 2}:=\tree_{2,2,\dots,2,\dots}$ given by the sequence of subgroups $\mathcal S = \left(2\Z,4\Z,\dots,2^n\Z,\dots \right)$. This action is generated by a single element; let $s \in \text{Aut}\left(\tree_{\tilde 2} \right)$ be this generator. 

Before continuing our construction, we remark that Theorem \ref{theosuffisant}.\ref{theosuffisant1} and the remark \ref{rmqNormal} implies that $\rho_{\mathcal S}$ weakly contains the regular representation of $\Z$. The group $\Z$ is amenable, hence these two representations are weakly isomorphic i.e. 
\begin{equation}\label{tarace}
 \cs{\rho_{\mathcal S}}\left( \Z \right)=\cs{\rho_{\tree_{\tilde 2}}} \left( \Z \right)\simeq\cs{\lambda} \left( \Z \right). 
\end{equation}

We now consider the rooted tree $\tree_{6,2,2,\dots,2,\dots}$ whose first level $L_1$ is the set $X$ on which $H$ acts, and whose subtrees rooted at $L_1$ are all $\tree_{\tilde 2} $ on which $\Z$ acts.

\begin{defi}\label{defG}
The group $G$ is the subgroup of $\text{Aut} \left(\tree_{6,2,2,\dots,2,\dots} \right)$ generated by the elements $\bar{\alpha}$, $\bar{\beta_r} $ and $ \bar{s} $ defined via the recursion map (see Section 2):

\begin{eqnarray*}
\Phi(\bar{\alpha})&=&\left(1,1,1,1,1,1 \right) \alpha, \\
\Phi(\bar{\beta_r})&=&\left(1,1,1,1,1,1 \right) \beta_r, \\
\Phi(\bar{s})&=&\left(s,s,s,s,s,s \right).
\end{eqnarray*}
\end{defi}

The subgroup $\left\langle\bar{\alpha},\bar{\beta_r} \right\rangle$ is isomorphic to $H$ and the element $\bar{s} $ generates a copy of $\Z$ in $G$ which clearly commutes with $H$. Thus,
\begin{equation}\label{Gdecomp}
G=H \oplus \Z.
\end{equation}

The group $G$ acts spherically transitively on $\tree_{6,2,2,\dots}$ because $H$ acts transitively on its first level $L_1$, and by construction $\Z$ acts spherically transitively on all the subtrees $\tree_{\tilde 2}$ rooted at it. Moreover, the stabilizer of such a subtree is not trivial; for instance, the two right-most one (issued from the vertices labelled by $5$ and $6$) are fixed under the action of $\bar{\beta_r} $. The next proposition shows that the sufficient condition in Theorem  \ref{theosuffisant}.\ref{theosuffisant1} is not necessary and explains the additional algebraic assumption in \ref{theonec}.   

\begin{prop}\label{propex}
The representation $\rho_{\tree_{6,2,2,\dots}}$ of $G$ on $\ell^2 \left(\tree_{6,2,2,\dots}^0 \right)$ is weakly isomorphic to its regular $\lambda$. 
\end{prop}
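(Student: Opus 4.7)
The plan is to decompose the representation $\rho_{\tree_{6,2,2,\dots}}$ according to the structure $G = H \oplus \Z$ given in \eqref{Gdecomp} and to identify the resulting $C^{*}$-algebra with $\cs{\lambda}(G)$.

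First I would write down the canonical splitting of the Hilbert space coming from the first level of the tree:
\[
\ell^{2}\bigl(\tree_{6,2,2,\dots}^{0}\bigr) \;=\; \C\,\delta_{\varnothing} \;\oplus\; \bigoplus_{x\in L_{1}} \ell^{2}\bigl(x\tree\bigr),
\]
and use the canonical identification $x\tree \cong \tree_{\tilde 2}$ for each $x\in L_{1}$ to rewrite the second summand as $\ell^{2}(L_{1})\otimes \ell^{2}(\tree_{\tilde 2}^{0})$. Inspecting Definition \ref{defG} of the generators $\bar\alpha,\bar\beta_{r},\bar s$, the $H$-factor acts by $\rho_{1}\otimes \mathrm{id}$ on the second summand (permuting the six copies) while the $\Z$-factor acts by $\mathrm{id}\otimes \rho_{\tree_{\tilde 2}}$ (diagonally on the six copies). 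Both factors act trivially on $\C\delta_{\varnothing}$. Hence, as representations of $G=H\oplus\Z$,
\[
\rho_{\tree_{6,2,2,\dots}} \;\simeq\; \epsilon_{G} \,\oplus\, \bigl(\rho_{1}\boxtimes \rho_{\tree_{\tilde 2}}\bigr),
\]
where $\epsilon_{G}$ is the trivial representation and $\boxtimes$ denotes the external tensor product of a representation of $H$ with one of $\Z$.

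Next I would compute the $C^{*}$-algebra generated by $\rho_{1}\boxtimes\rho_{\tree_{\tilde 2}}$. Because $H$ is finite and $\rho_{1}$ is faithful on $\C H$ (Property \ref{col}.\ref{cool}), the map $\rho_{1}:\C H \to \mathrm{End}(\C^{6})$ is an isomorphism onto its image, so $\cs{\rho_{1}}(H)=\C H = \cs{\lambda}(H)$. Combined with the weak isomorphism $\cs{\rho_{\tree_{\tilde 2}}}(\Z)\simeq \cs{\lambda}(\Z)$ recorded in \eqref{tarace}, and using that for an external tensor product with a finite-dimensional factor the $C^{*}$-tensor norm is uniquely defined, I obtain
\[
\cs{\rho_{1}\boxtimes\rho_{\tree_{\tilde 2}}}(G) \;=\; \cs{\rho_{1}}(H)\otimes \cs{\rho_{\tree_{\tilde 2}}}(\Z) \;\simeq\; \C H \otimes \cs{\lambda}(\Z) \;=\; \cs{\lambda}(G).
\]
This gives the weak isomorphism $\rho_{1}\boxtimes\rho_{\tree_{\tilde 2}}\simeq \lambda$.

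Finally, $G=H\oplus\Z$ is amenable, so the trivial representation $\epsilon_{G}$ is weakly contained in $\lambda$, and adding a direct summand that is already weakly contained cannot change the weak equivalence class. Thus $\rho_{\tree_{6,2,2,\dots}}\simeq\lambda$. The only delicate point is the $C^{*}$-algebraic identification in the middle paragraph: one must be careful to check that the weak isomorphism provided by \eqref{tarace} really extends to the external tensor product, which is where finite-dimensionality of $\rho_{1}$ and the resulting uniqueness of the tensor norm are essential.
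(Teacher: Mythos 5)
Your proposal is correct and follows essentially the same route as the paper: decompose $\ell^2\left(\tree_{6,2,2,\dots}^0\right)$ along the first level, identify the complement of $\C\,\delta_{\varnothing}$ with $\ell^2\left(L_1\right)\otimes\ell^2\left(\tree_{\tilde 2}^0\right)$ so that $\cs{\rho}\left(G\right)$ becomes $\rho_1\left(\C H\right)\otimes\cs{\rho_{\tree_{\tilde 2}}}\left(\Z\right)$, and conclude via Property \ref{col}.\ref{cool} and (\ref{tarace}). The only (harmless) divergence is how the trivial summand is absorbed: you invoke amenability of $G$ to get $\epsilon_G\prec\lambda$, whereas the paper observes that the restriction $\rho'$ still contains $\epsilon$, so that $\cs{\rho}\left(G\right)=\cs{\rho'}\left(G\right)$.
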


\begin{proof}
To simplify notation, let us write $\rho$ for $\rho_{\tree_{6,2,2,\dots}}$. Maybe the easiest way to prove \ref{propex}  is to use the language of $C^*$-algebras. We want to prove that 
\[
\cs{\rho}\left( G \right)  = \cs{\lambda} \left( G \right).
\]  

We have 
\begin{eqnarray*}
\cs{\lambda}\left( G \right)&=&\cs{\lambda}\left(\Z\right)\otimes \cs{\lambda}\left(H\right)\quad \text{by (\ref{Gdecomp})}\\     
                            &=&\cs{\lambda}\left(\Z\right)\otimes \C H \quad \text{because $H$ is finite.}
\end{eqnarray*}

Thus Proposition \ref{propex} is a consequence of (\ref{tarace}) and the following lemma.
\end{proof}

\begin{lem}\label{zob}
 \[\cs{\rho}\left( G \right) = \cs{\rho_{\tree_{\bar 2}}} \left( \Z \right) \otimes \C H\]
\end{lem}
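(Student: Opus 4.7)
The plan is to split $\rho$ into a trivial summand and a tensor product, then identify the latter's generated $C^*$-algebra using the fact that $G = H \oplus \Z$ and that $\rho_1$ is faithful on $\C H$. First, I would decompose the underlying Hilbert space along the first level of the tree:
\[
\ell^2\left(\tree_{6,2,2,\dots}^0\right) \;=\; \C\,\delta_\varnothing \;\oplus\; \bigoplus_{v\in L_1}\ell^2\bigl((v\tree_{6,2,2,\dots})^0\bigr) \;\cong\; \C \;\oplus\; \ell^2\left(L_1\right)\otimes \ell^2\left(\tree_{\bar 2}^0\right),
\]
via the canonical identification $(v\tree_{6,2,2,\dots})^0 \leftrightarrow \tree_{\bar 2}^0$ for each $v \in L_1$.

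Next, I would read the generators through this identification using Definition \ref{defG}. Since $\Phi(\bar\alpha) = (1,\dots,1)\alpha$ and $\Phi(\bar\beta_r) = (1,\dots,1)\beta_r$, the elements $\bar\alpha$ and $\bar\beta_r$ fix $\varnothing$ and act as $\rho_1(\alpha)\otimes \mathrm{Id}$ and $\rho_1(\beta_r)\otimes \mathrm{Id}$ on $\ell^2(L_1)\otimes \ell^2(\tree_{\bar 2}^0)$. Since $\Phi(\bar s)=(s,s,s,s,s,s)$, the element $\bar s$ fixes $\varnothing$ and $L_1$ pointwise, and acts as $\mathrm{Id}\otimes\rho_{\tree_{\bar 2}}(s)$. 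Because $G = \langle\bar\alpha,\bar\beta_r\rangle \oplus \langle\bar s\rangle \cong H\oplus\Z$ by (\ref{Gdecomp}), this shows
\[
\rho \;\simeq\; \epsilon_G \;\oplus\; \bigl(\rho_1 \otimes \rho_{\tree_{\bar 2}}\bigr),
\]
where $\epsilon_G$ is the trivial representation of $G$ and where $\rho_1 \otimes \rho_{\tree_{\bar 2}}$ denotes the natural representation of $H\oplus\Z$ on $\ell^2(L_1)\otimes\ell^2(\tree_{\bar 2}^0)$.

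Then I would notice that $\epsilon_G\prec\rho_1\otimes\rho_{\tree_{\bar 2}}$: both factors have invariant vectors (namely $\sum_{x\in X}\delta_x \in \ell^2(L_1)$ for $\rho_1$, since $H$ permutes $X$, and $\delta_\varnothing\in\ell^2(\tree_{\bar 2}^0)$ for $\rho_{\tree_{\bar 2}}$), so $\epsilon_G$ is actually a subrepresentation of $\rho_1\otimes\rho_{\tree_{\bar 2}}$. Hence the trivial summand is absorbed and $\cs{\rho}(G) \simeq \cs{\rho_1\otimes\rho_{\tree_{\bar 2}}}(G)$.

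Finally, since the $H$- and $\Z$-actions act on separate tensor factors, $\cs{\rho_1\otimes\rho_{\tree_{\bar 2}}}(G)$ is generated by the two commuting subalgebras $\cs{\rho_1}(H)\otimes \mathrm{Id}$ and $\mathrm{Id}\otimes\cs{\rho_{\tree_{\bar 2}}}(\Z)$, and therefore equals their tensor product (the minimal and maximal tensor products agree here because $\C H$ is finite-dimensional hence nuclear). By Property \ref{col}(\ref{cool}), $\rho_1$ extends to a faithful $*$-homomorphism on $\C H$, so $\cs{\rho_1}(H) = \C H$, yielding the claimed identity
\[
\cs{\rho}(G) \;=\; \cs{\rho_{\tree_{\bar 2}}}(\Z) \otimes \C H.
\]
The main thing to be careful about is the decomposition step: the formulas for $\Phi(\bar\alpha),\Phi(\bar\beta_r),\Phi(\bar s)$ must be read correctly so that the $H$-action and the $\Z$-action genuinely decouple onto the two tensor factors; once this is done, the $C^*$-algebraic identifications are essentially formal.
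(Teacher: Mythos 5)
Your proposal is correct and follows essentially the same route as the paper: strip off the root (a copy of the trivial representation, which is re-absorbed because the complement still has invariant vectors, e.g.\ the constant function on $L_1$), identify the complement with $\ell^2(L_1)\otimes\ell^2(\tree_{\tilde 2}^0)$ on which $H$ and $\Z$ act on separate tensor factors, and conclude via the faithfulness of $\rho_1$ on $\C H$ from Property \ref{col}.\ref{cool}. The only cosmetic difference is that the paper phrases the middle step as exhibiting $\cs{\rho'}(G)$ inside $\mathcal A\otimes\mathrm{End}(\C^6)$ rather than writing $\rho'\simeq\rho_1\otimes\rho_{\tree_{\tilde 2}}$ outright.
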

\begin{proof}[Proof of Lemma \ref{zob}]
Consider the restriction $\rho' $ of $\rho$ to the invariant subspace $\mathcal H $ of $\ell^2 \left(\tree_{6,2,2,\dots}^0 \right)$ consisting of functions null at the root of $\tree_{6,2,2,\dots} $ :
\begin{eqnarray}
  \mathcal H &=& \left\lbrace f \in \ell^2 \left(\tree_{6,2,2,\dots}^0 \right) \ |\ \text{supp} (f) \subset \bigcup_{v \in L_1} v\tree_{6,2,2,\dots} \right\rbrace\\
             &=& \bigoplus_{v \in L_1} \ell^2 \left(\tree_{\tilde 2}^0 \right).\label{merde}
\end{eqnarray}

The above restriction amounts to the removal of a copy of $\epsilon$ (the trivial representation) from $\rho$. But $\rho'$ still contains $\epsilon$ since the constant functions on a level $L_n$ belong to $\ell^2 \left( L_n \right) \subset \mathcal H$. Therefore
\[
 \cs{\rho}\left( G \right) = \cs{\rho'}\left( G \right).
\]

Now, the decomposition (\ref{merde}) of $\mathcal H $ implies that $\cs{\rho'}\left( G \right) $ is a subalgebra of $\mathcal A \otimes \text{End} \left(\C^6 \right)$ where $\mathcal A$ is the $C^*$-algebra generated by the restrictions $\varphi_v \left( g\right) \in \mathcal U \left(  \ell^2 \left(\tree_{\tilde 2}^0 \right)\right)$ with $v\in L_1$ and $g\in G$. By Definition \ref{defG} of the group $G$, the algebra $\mathcal A$ is generated by $ s$ and hence is $ \cs{\rho_{\tree_{\tilde 2}}} \left( \Z \right) $. Moreover, the only elements in $G$ acting non-trivially on $L_1$ (i.e. inducing a non-trivial permutation of the factors $\ell^2 \left(\tree_{\tilde 2}^0 \right) $ in $\mathcal H $) are the elements of the subgroup $H$. Summarizing, we have
\[
 \cs{\rho'}\left( G \right)= \cs{\rho_{\tree_{\tilde 2}}} \left( \Z \right) \otimes \rho_1 \left( \C H \right)
\]
where $\rho_1$ is the permutational representation of $H$ on $\ell^2 \left( X \right)$ defined at the beginning of this paragraph, i.e. the restriction of $\rho$ to $\ell^2 \left( L_1 \right)$. Lemma \ref{zob} is now a consequence of Property \ref{col}.\ref{cool}. 
\end{proof}

\end{exemple}

\section{Examples and applications}\label{sec5}

In this last section, we want to illustrate the results of the previous two. In particular, we will see that the sufficient condition of \ref{theosuffisant}.\ref{theosuffisant1} is automatically fullfilled for many lattices in semisimple real Lie groups, independently on the choice of a sequence $\mathcal S$ defining a faithful representation $\ros$. We will also see that for weakly branched groups, the situation is diametrically different. 

\subsection{Lattices in semisimple real Lie groups, hyperbolic groups}\label{subsecLattices}

\subsubsection{Higher $\R$-rank case.}
The following proposition is a direct consequence of the Margulis Normal Subgroup Theorem \cite{margulis78,margulis79,zimmer81}. 

\begin{prop}\label{propHR}
 Let $G$ be a connected semisimple real Lie group with finite centre, no compact factor and $\R$-rank $\geq 2$. Let $\Gamma$ be an irreducible lattice in $G$ acting faithfully and spherically transitively on a rooted tree $\tree$. Then for all subtrees $v\tree$ of $\tree$, the stabilizer $\St_{\Gamma} \left( v\tree \right)$ is trivial. 
\end{prop}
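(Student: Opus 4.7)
The plan is to derive a contradiction from the assumption that some vertex $v$ of $\tree$, say $v \in L_n$, satisfies $\St_{\Gamma}(v\tree) \neq \{1\}$. Set $\Gamma_v := \St_{\Gamma}(v)$. Since $\Gamma$ permutes the finite set $L_n$, $\Gamma_v$ has finite index in $\Gamma$; being a finite-index subgroup of an irreducible lattice in the higher-rank semisimple group $G$, it is itself an irreducible lattice in $G$. Moreover, every element of $\Gamma_v$ preserves the subtree $v\tree$ setwise, so conjugation carries $\St_{\Gamma}(v\tree)$ into itself; that is, $\St_{\Gamma}(v\tree)$ is \emph{normal} in $\Gamma_v$. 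The Margulis Normal Subgroup Theorem, applied to $\Gamma_v$, then yields a dichotomy: either \textbf{(a)} $\St_{\Gamma}(v\tree)$ has finite index in $\Gamma_v$, or \textbf{(b)} $\St_{\Gamma}(v\tree)$ is finite and contained in $Z(\Gamma_v)$.

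In case (a), every conjugate $g\,\St_{\Gamma}(v\tree)\,g^{-1}$ is again of finite index in $\Gamma$, and by spherical transitivity these conjugates are exactly the subgroups $\{\St_{\Gamma}(v'\tree) : v' \in L_n\}$. Their intersection is a finite intersection of finite-index subgroups, hence of finite index in $\Gamma$ and in particular infinite. On the other hand, an element in this intersection fixes every vertex of level $\geq n$ and therefore, lifting successively to parents, fixes every vertex of $\tree$; by faithfulness this forces the intersection to be $\{1\}$, a contradiction.

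In case (b), we must transport centrality in $\Gamma_v$ to centrality in $\Gamma$. By the Borel density theorem, the lattice $\Gamma_v$ is Zariski dense in $G$ (as $G$ has no compact factor), so the centralizer of $\Gamma_v$ in $G$ equals the centralizer of $G$ in itself, namely $Z(G)$. Hence
\[
Z(\Gamma_v) \;=\; \Gamma_v \cap Z(G) \;\subseteq\; \Gamma \cap Z(G) \;\subseteq\; Z(\Gamma).
\]
Therefore $\St_{\Gamma}(v\tree) \subseteq Z(\Gamma)$ is normal in $\Gamma$, so it coincides with the intersection of its conjugates, which we have just shown to equal $\{1\}$---contradiction.

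I do not expect any serious obstacle here: the whole argument pivots on applying Margulis NST to $\Gamma_v$ rather than to $\Gamma$ itself (case (a) is killed by a finite-index intersection count combined with faithfulness), and case (b) is disposed of by a standard Borel density argument that migrates $Z(\Gamma_v)$ into $Z(\Gamma)$. The only small subtlety worth flagging is checking that finite-index subgroups of irreducible higher-rank lattices remain irreducible higher-rank lattices, so that NST and Borel density are legitimately available for $\Gamma_v$.
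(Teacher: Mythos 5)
Your argument is correct and follows essentially the same route as the paper: apply the Margulis Normal Subgroup Theorem to the normal subgroup $\St_{\Gamma}\left(v\tree\right)$ of the finite-index (hence still irreducible) lattice $\St_{\Gamma}\left(v\right)$, then conclude by intersecting the conjugates $\St_{\Gamma}\left(v'\tree\right)$, $v'\in L_n$, which is trivial by faithfulness. The only cosmetic differences are that the paper rules out the finite-index branch up front by noting that $\St_{\Gamma}\left(v\tree\right)$ already has infinite index in $\Gamma$, and it reads centrality in $G$ (hence in $\Gamma$) directly off the statement of the theorem, so your Borel density detour, while valid, is not needed.
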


\begin{proof}
Let $v$ be a vertex of $\tree$. The group $\St_{\Gamma} \left(v \right)$ has finite index in $\Gamma$, hence is also an irreducible lattice in $G$. The stabilizer $\St_{\Gamma}\left( v\tree \right)$ of the subtree rooted at $v$ is a normal subgroup of $\St_{\Gamma} \left( v \right)$. Moreover, $\St_{\Gamma}\left( v\tree \right)$ has infinite index in $\Gamma$ (because $\Gamma$ acts spherically transitively on the subtree $v\tree $). Therefore it has infinite index in  $\St_{\Gamma} \left( v \right)$. By the Margulis Normal Subgroup Theorem, $\St_{\Gamma}\left( v\tree \right)$ is central in $G$ and thus in $\Gamma$. From this follows that $\St_{\Gamma}\left( v\tree \right)$ is trivial, because $\Gamma$ acts transitively on $L_n$ so that 
\[
\bigcap_{\gamma \in \Gamma} \gamma^{-1}\St_{\Gamma} \left( v\tree\right)\gamma=\left\lbrace 1\right\rbrace. 
\]
\end{proof}

\subsubsection{Hyperbolic groups and $\R$-rank=1 case.}\label{subsecHyp}

The first result of this paragraph deals with Gromov hyperbolic groups. We refer to \cite{gromovhyp87,ghysharpehyp} for details and proofs of the general facts which we use in the proof below. 

\begin{prop}\label{prophyp}
Let $\Gamma$ be a Gromov hyperbolic group. Assume that $\Gamma$ is non-elementary and acts faithfully and spherically transitively on a rooted tree $\tree$. Then $\bigcup_{v\in \tree^0} \St_{\Gamma} (v\tree)$ is finite. 
\end{prop}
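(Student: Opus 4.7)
The plan is to proceed in two stages: first show that every individual stabilizer $\St_\Gamma(v\tree)$ is a finite subgroup of $\Gamma$, and then exploit the uniform bound on orders of finite subgroups in a hyperbolic group to collapse the infinite union to a finite one.

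For the first stage, I would suppose for contradiction that $\St_\Gamma(v\tree)$ is infinite for some $v \in L_n$ and rerun the maximality argument from the proof of Theorem \ref{theonec}, with ``non-trivial'' replaced by ``infinite''. Since $\St_\Gamma(L_n)$ has finite index in $\Gamma$, the intersection $\St_\Gamma(L_n) \cap \St_\Gamma(v\tree)$ is still infinite, so one can pick $A_0 \subseteq L_n$ maximal such that $H_0 := \St_\Gamma(L_n; A_0\tree)$ is infinite; faithfulness forces $A_0 \neq L_n$, and spherical transitivity produces a distinct conjugate $A_1$. The commutator calculation of Property (C) in the proof of Theorem \ref{theonec} still gives $[H_0, H_1] \subseteq \St_\Gamma(L_n; (A_0 \cup A_1)\tree) =: F$, which is now \emph{finite} (rather than trivial) by the maximality of $A_0$. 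I would then invoke hyperbolicity: every infinite subgroup of $\Gamma$ contains an infinite-order element, so pick $\gamma_0 \in H_0$ of infinite order. The map $h \mapsto [\gamma_0, h]$ sends $H_1$ into $F$ with fibers equal to cosets of $C_{H_1}(\gamma_0)$, hence $C_{H_1}(\gamma_0)$ has finite index in $H_1$ and in turn contains some $\gamma_1$ of infinite order. The commuting pair $\gamma_0, \gamma_1$ must share the same boundary fixed points $\{p, q\} \subseteq \partial\Gamma$, so both centralizers $C_\Gamma(\gamma_i)$ lie in the virtually cyclic group $V := \mathrm{Stab}_\Gamma(\{p, q\})$. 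It follows that $V \cap H_0$ and $V \cap H_1$ are finite-index subgroups of $H_0$ and $H_1$ respectively, so both are infinite subgroups of the virtually cyclic group $V$; two infinite subgroups of a virtually cyclic group have infinite intersection, contradicting $H_0 \cap H_1 = F$.

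For the second stage I would let $N$ denote the uniform bound on orders of finite subgroups of the hyperbolic group $\Gamma$. By spherical transitivity, $s_n := |\St_\Gamma(v\tree)|$ depends only on the level $n$ of $v$; the inclusion $\St_\Gamma(v\tree) \subseteq \St_\Gamma(v'\tree)$ when $v'$ is a child of $v$ yields $s_n \leq s_{n+1} \leq N$, so the sequence stabilizes at some level $n_0$. For $n \geq n_0$, equality of orders together with the inclusion forces $\St_\Gamma(v\tree) = \St_\Gamma(v'\tree)$ whenever $v'$ descends from $v$. Hence every stabilizer $\St_\Gamma(w\tree)$ is either contained in (for $w$ above level $n_0$) or equal to (for $w$ at or below level $n_0$) the stabilizer of some $v \in L_{n_0}$, so
\[
\bigcup_{w \in \tree^0} \St_\Gamma(w\tree) = \bigcup_{v \in L_{n_0}} \St_\Gamma(v\tree),
\]
a finite union of finite sets. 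The main obstacle will be the first stage: the commutator computation alone only confines $[H_0, H_1]$ to a finite subgroup $F$ rather than the trivial one, so one cannot conclude as directly as in Theorem \ref{theonec}, and the hyperbolic facts about virtually cyclic centralizers and shared boundary fixed points are essential to turn this weaker information into a contradiction.
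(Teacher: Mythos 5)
Your proposal is correct. It shares the paper's two--stage skeleton --- first show that each individual $\St_{\Gamma}(v\tree)$ is finite via a maximality argument over subsets $A\subseteq L_n$ together with the coordinatewise commutator computation from the proof of Theorem \ref{theonec}, then collapse the union using the scarcity of finite subgroups in a hyperbolic group --- but both stages are completed by genuinely different means. In stage one the paper argues constructively: it finds $h\in\St_{\Gamma}(L_n)$ such that $\gamma^k$ and $h\sigma\gamma^k\sigma^{-1}h^{-1}$ have disjoint fixed--point pairs on $\partial\Gamma$ (using that the finite--index subgroup $\St_{\Gamma}(L_n)$ is non-amenable and hence cannot preserve $\text{Fix}_{\partial\Gamma}(\gamma)$), applies ping-pong to get a free subgroup, and thereby exhibits an infinite--order element of $\St_{\Gamma}\left(L_n;\left(\mathcal A_{max}\cup\sigma(\mathcal A_{max})\right)\tree\right)$, contradicting maximality. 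You instead read maximality as saying that $H_0\cap H_1=\St_{\Gamma}\left(L_n;(A_0\cup A_1)\tree\right)$ is finite and contradict this with softer structure theory: infinite subgroups of hyperbolic groups contain loxodromics, commuting loxodromics share endpoints, endpoint stabilizers are virtually cyclic, and two infinite subgroups of a virtually cyclic group meet in an infinite group. This dispenses with the ping-pong lemma and with the search for $h$ altogether. Two points you should spell out: the existence of $A_1=\alpha(A_0)\neq A_0$ follows because transitivity on $L_n$ leaves no proper non-empty subset invariant; and the finite--index claim for $V\cap H_0$ requires running the commutator--fiber argument a second time, with $g\mapsto[g,\gamma_1]$ on $H_0$ in place of $h\mapsto[\gamma_0,h]$ on $H_1$. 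In stage two the paper quotes the finiteness of the number of conjugacy classes of finite subgroups and uses $g\St_{\Gamma}(w\tree)g^{-1}=\St_{\Gamma}(g(w)\tree)$ to push every stabilizer back to a bounded level, whereas your monotone--and--bounded argument $s_n\leq s_{n+1}\leq N$ uses only the uniform bound on orders of finite subgroups (itself a consequence of that same conjugacy--class finiteness) and yields the slightly cleaner identity $\bigcup_{w\in\tree^0}\St_{\Gamma}(w\tree)=\bigcup_{v\in L_{n_0}}\St_{\Gamma}(v\tree)$. Both routes rest on standard hyperbolic--group facts of comparable depth, so neither is weaker; yours is arguably more self-contained in the first stage.
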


\begin{remarque}
 This result is optimal. Indeed, in Example \ref{exnec}, replace the 6 subtrees rooted at the first level on which $\Z$ acts diagonally by 6 rooted trees $\tree_{\bar d}$ on which a residually finite hyperbolic group $\Gamma$ acts faihfully and spherically transitively. The subgroup of $\text{Aut}\left( \mathcal T _{6,\bar d}\right)$ that we get is then $H \times \Gamma$ and is hyperbolic because $H$ is finite. Here again, by construction, its action on $\mathcal T _{6,\bar d} $ is faithful and spherically transitive, but the subgroup $H$ fixes the subtrees rooted at the two right-most vertices of the first level.  
\end{remarque}

\begin{proof}
First, we prove that for every vertex $v\in \tree$, the stabilizer $\St_{\Gamma} (v\tree)$ is finite. If this is not the case, being a subgroup of a hyperbolic group,  $\St_{\Gamma} (v\tree)$ contains an element of infinite order $\gamma$. Let $n$ be the level of $v$, i.e. $v \in L_n$; replacing $\gamma$ by the non trivial element $\gamma^{N !}$ where $N=\left| L_n \right|$, we can assume also that $\gamma$ fixes the $n$-th level $L_n$. Once again, we consider for every subset $\mathcal A$ of $L_n$
\[
 \St_{\Gamma} \left( L_n ; A \tree \right) := \St_{\Gamma} \left( L_n \right) \cap \bigcap_{a \in A} \St_{\Gamma} \left( a \tree \right) . 
\]

Above, we showed that $\St_{\Gamma} \left( L_n ; \left\lbrace v \right\rbrace \tree \right)$ is not trivial, and even contains a infinite order element. So consider $\mathcal A_{max}$ a subset of maximal cardinality for which $\St_{\Gamma} \left( L_n ; \mathcal A_{max} \tree \right)$ contains an infinite order element, say $\gamma$. Of course, $\mathcal A_{max}$ is a strict subset of $L_n$. Hence, let us choose $v \in \mathcal A_{max} $, $w \in L_n \setminus \mathcal A_{max}$, and $\sigma \in \Gamma$ that such $\sigma (v)=w $ (this is possible because the action of $\Gamma$ on $L_n$ is assumed transitive). 

\begin{lem}\label{wtf}
There exists $h\in \St_{\Gamma} \left( L_n \right)$ and integer $k$ such that 
\[
\left[\gamma^k, h\sigma\gamma^k \sigma^{-1} h^{-1} \right]:= \gamma^k h \sigma \gamma^k  \sigma^{-1} h^{-1} \gamma^{-k} h \sigma\gamma^{-k}  \sigma^{-1} h^{-1}   
\]
has infinite order. 
\end{lem}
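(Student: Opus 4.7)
The plan is to exploit the dynamics of loxodromic elements on the boundary $\partial\Gamma$ of the non-elementary hyperbolic group $\Gamma$, combined with a standard ping-pong argument. Since $\gamma$ has infinite order it is loxodromic with exactly two fixed points $\gamma^+,\gamma^-\in\partial\Gamma$; likewise, for any $h\in\Gamma$, the conjugate $h\sigma\gamma\sigma^{-1}h^{-1}$ is loxodromic with fixed points $h\sigma\gamma^+$ and $h\sigma\gamma^-$. By ping-pong applied to pairwise disjoint neighborhoods of these four boundary points, whenever
\[
\{\gamma^+,\gamma^-\}\cap\{h\sigma\gamma^+,h\sigma\gamma^-\}=\emptyset,
\]
there is an integer $k$ large enough so that $\langle\gamma^k,\,h\sigma\gamma^k\sigma^{-1}h^{-1}\rangle$ is free of rank $2$. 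Its commutator $[\gamma^k,h\sigma\gamma^k\sigma^{-1}h^{-1}]$ is then a non-trivial element of a free group, hence has infinite order. So the whole task reduces to producing $h\in\St_\Gamma(L_n)$ that moves the pair $\{\gamma^+,\gamma^-\}$ off itself under the action of $h\sigma$ on $\partial\Gamma$.

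To find such an $h$, I would use that $\St_\Gamma(L_n)$ has finite index in $\Gamma$, hence remains a non-elementary subgroup whose limit set is all of $\partial\Gamma$. In particular, the diagonal action of $\St_\Gamma(L_n)$ on the space $\partial^2\Gamma:=\{(\xi,\eta)\in\partial\Gamma\times\partial\Gamma:\xi\neq\eta\}$ of distinct ordered pairs is minimal. The open subset
\[
U:=\bigl\{(\xi,\eta)\in\partial^2\Gamma:\{\xi,\eta\}\cap\{\gamma^+,\gamma^-\}=\emptyset\bigr\}
\]
is non-empty, so the $\St_\Gamma(L_n)$-orbit of $(\sigma\gamma^+,\sigma\gamma^-)$ must meet $U$; any element $h\in\St_\Gamma(L_n)$ realizing this provides the required disjointness condition, and then the ping-pong step above supplies the integer $k$.

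The main obstacle is the dynamical density statement on the boundary: one has to verify that the finite-index subgroup $\St_\Gamma(L_n)$ still acts with double topological transitivity on $\partial\Gamma$. This is a classical fact in the theory of hyperbolic groups, resting on the observation that a finite-index subgroup of a non-elementary hyperbolic group is itself non-elementary with the same limit set $\partial\Gamma$, and such groups act minimally on both $\partial\Gamma$ and $\partial^2\Gamma$. Once this input is granted, the remaining ingredients of the proof, namely the loxodromicity of $\gamma$ and its conjugates, the ping-pong construction of free subgroups of rank $2$ from elements with disjoint fixed-point pairs, and the triviality-free nature of commutators in such free groups, are entirely standard.
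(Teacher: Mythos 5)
Your first step coincides with the paper's: once $\mathrm{Fix}_{\partial\Gamma}(\gamma)$ and $\mathrm{Fix}_{\partial\Gamma}\left(h\sigma\gamma\sigma^{-1}h^{-1}\right)=h\sigma\,\mathrm{Fix}_{\partial\Gamma}(\gamma)$ are disjoint, ping-pong gives $k$ with $\langle \gamma^k, h\sigma\gamma^k\sigma^{-1}h^{-1}\rangle\cong\mathbb F_2$, and the commutator is a non-trivial element of a free group, hence of infinite order. The gap is in how you produce $h$. The ``classical fact'' you invoke --- that a non-elementary hyperbolic group (or a finite-index subgroup of one) acts \emph{minimally} on the space $\partial^2\Gamma$ of distinct ordered pairs --- is false. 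Minimality holds on $\partial\Gamma$; on $\partial^2\Gamma$ the action is only topologically transitive (it has a dense orbit), and not every orbit is dense. Worse, the orbit you use is precisely one of the bad ones: the orbit of the fixed-point pair of a loxodromic element is a closed, discrete subset of $\partial^2\Gamma$, corresponding to the locally finite family of lifts of a closed geodesic. Concretely, in $F_2=\langle a,b\rangle$ no translate $\left(wa^{-\infty},wa^{+\infty}\right)$ can approach $\left(b^{-\infty},b^{+\infty}\right)$, since $w$ would have to begin with both $b^{-n}$ and $b^{n}$. So the assertion that the orbit of $\left(\sigma\gamma^{+},\sigma\gamma^{-}\right)$ must meet $U$ is not justified by what you cite.

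The conclusion you need is nevertheless true, and the paper obtains it by an essentially algebraic route: for infinite-order elements of a hyperbolic group the fixed-point pairs are either equal or disjoint, so the only way to fail for every $h$ is that $h\sigma$ preserves $\mathrm{Fix}_{\partial\Gamma}(\gamma)$ for all $h\in\St_{\Gamma}(L_n)$; since one may assume $\sigma$ itself preserves this pair, this would force $\St_{\Gamma}(L_n)$ into the setwise stabilizer $P$ of $\left\lbrace\gamma^{+\infty},\gamma^{-\infty}\right\rbrace$, which is virtually cyclic, contradicting the non-elementarity (indeed non-amenability) of the finite-index subgroup $\St_{\Gamma}(L_n)$. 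Alternatively, one can avoid the equal-or-disjoint dichotomy by observing that the set of bad $h$ is a union of four cosets of stabilizers of boundary points, which are elementary subgroups, and applying B.~H.~Neumann's covering lemma. Either repair slots directly into your argument; as written, however, the dynamical input is wrong.
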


\begin{proof}
It is sufficient to prove that there is an $h \in \St_{\Gamma} \left( L_n \right)$ such that  
\[
 \text{Fix}_{\partial \Gamma} \left(\gamma \right)\cap \text{Fix}_{\partial \Gamma} \left(h\sigma\gamma\sigma^{-1} h^{-1}\right)=\emptyset
\]
where $\text{Fix}_{\partial \Gamma} \left(\gamma \right)=\left\lbrace \gamma^{+\infty},\gamma^{-\infty}  \right\rbrace $ and $\text{Fix}_{\partial \Gamma} \left( h\sigma\gamma\sigma^{-1} h^{-1} \right)=\left\lbrace h\sigma (\gamma^{+\infty}),h\sigma (\gamma^{-\infty})  \right\rbrace $ are the pairs of fixed points of $\gamma $  and $ h\sigma\gamma\sigma^{-1} h^{-1} $ in the boundary $\partial \Gamma$ of $\Gamma$. Indeed, it is known that in that case, the group generated by $\gamma^k$ and $ \left(h\sigma\gamma\sigma^{-1} h^{-1}\right)^k =h\sigma\gamma^k\sigma^{-1} h^{-1} $ is the free group $\mathbb F _2$, as soon as $k$ is big enough. 

If $\text{Fix}_{\partial \Gamma} \left(\gamma \right)\cap \text{Fix}_{\partial \Gamma} \left(\sigma\gamma\sigma^{-1}\right)=\emptyset$, we take $h=1$. If this is not the case, we want to prove that $\St_{\Gamma} \left( L_n \right)$ is not included in the subgroup $P$ of $G$ consisted of the elements preserving $\text{Fix}_{\partial \Gamma} \left(\gamma \right) $. This is easy because this last group contains $\left\langle \gamma \right\rangle $ as a finite index subgroup, whereas $\St_{\Gamma} \left( L_n \right)$ has finite index in $\Gamma$: as $\Gamma$ is non-elementary, $\St_{\Gamma} \left( L_n \right)$ is not amenable and thus cannot be a subgroup of $P$ (which is quasi-isometric to $\Z$).
\end{proof}

Fix $h$ and $k$ like in the statement of the previous lemma. Since $\gamma^k $ belongs to $ \St_{\Gamma} \left( L_n ; \mathcal A_{max} \tree \right)$ and $h$ is in $\St _{\Gamma} ( L_n)$, the element $h\sigma\gamma^k \sigma^{-1} h^{-1}$ belongs to the group $ \St_{\Gamma} \left( L_n ; \sigma(\mathcal A_{max}) \tree \right)$. If we write down the decomposition of these two elements with respect to the level $L_n$, we get
\begin{eqnarray*}
\Phi^{(n)} \left(\gamma^k \right)&=& \left( *,\dots,*,1,*,\dots,1,\dots \right)   \\
\Phi^{(n)} \left( h\sigma\gamma^k \sigma^{-1} h^{-1}\right)&=& \left( *,\dots,1,1,*,\dots,*,\dots \right)
\end{eqnarray*}
where the 1s appear respectively in the positions corresponding to $ x\in  \mathcal A_{max}$ and $ y\in  \sigma\left(\mathcal A_{max}\right)$. Therefore
\begin{eqnarray*}
 \Phi^{(n)} \left( \left[\gamma^k, h\sigma\gamma^k \sigma^{-1} h^{-1} \right]  \right)&=& \left( *,\dots,1,1,*,\dots,1,\dots \right)
\end{eqnarray*}
where the 1s appear in the positions corresponding to $ z\in \mathcal A_{max} \cup  \sigma(\mathcal A_{max})$. Lemma \ref{wtf} then implies that $\left[\gamma^k, h\sigma\gamma^k \sigma^{-1} h^{-1} \right] $ is an infinite order element of the group 
\[ \St_{\Gamma} \left( L_n ;\mathcal B \tree \right) \]
with $\mathcal B =\mathcal A_{max} \cup \sigma(\mathcal A_{max})$. This contradicts the maximality of $\left|\mathcal A_{max} \right| $ because $w \in \sigma(\mathcal A_{max})\setminus \mathcal A_{max}$ and so $\left|\mathcal A_{max} \right|<\left|\mathcal B \right| $. We just proved that all the stabilizers $\St_G\left(v\tree\right)$ are finite. \\

To complete the proof of \ref{prophyp}, recall that in a hyperbolic group, there are only finitely many conjugacy classes of finite group. Hence, there exists a non-negative integer $N$ such that 
\begin{eqnarray*}
&\forall k \in \N, \ \forall v \in L_{N+k},\  \exists g\in G,\  i\leq N \text{ and } w \in L_i \text{ such that }  & \\
&\St_{\Gamma} (v\tree) = g  \St_{\Gamma} (w\tree)  g^{-1}.&
\end{eqnarray*}
As $ g  \St_{\Gamma} (w\tree)  g^{-1} = \St_{\Gamma} \left( g(w) \tree \right) $, we conclude that 
\[
\bigcup_{v\in \tree^0} \St_{\Gamma} (v\tree) = \bigcup_{i=1}^{N} \bigcup_{v \in L_i} \St_{\Gamma} (v\tree) 
\]
is finite.
\end{proof}

As a direct consequence of this proposition and Theorem \ref{theosuffisant}.\ref{theosuffisant2}, we obtain:

\begin{cor}
 Let $\Gamma$ be non-elementary, residually finite hyperbolic group. Let $\mathcal S$ be a decreasing sequence of finite index subgroups of $\Gamma$ such that the representation $\ros$ is faithful. Then, there exists a positive integer $n$ such that the representation $\ros^{\otimes n}$ of $\Gamma$ weakly contains the regular $\lambda$.
\end{cor}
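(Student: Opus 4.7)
The plan is to deduce the Corollary as a direct concatenation of the two results that immediately precede it, namely Proposition \ref{prophyp} and Theorem \ref{theosuffisant}.\ref{theosuffisant2}. The bridge between the hypothesis (which is phrased in terms of the sequence $\mathcal S$) and those results (which are phrased in terms of an action on a rooted tree) is the correspondence of Section \ref{sec1}.

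First I would translate the assumption. The decreasing sequence $\mathcal S = (H_n)_n$ of finite index subgroups gives, by the correspondence of Section \ref{sec1}, a rooted tree $\tree := \tree(\mathcal S)$ carrying a spherically transitive action of $\Gamma$, and the associated permutational representation $\rho_{\tree}$ is isomorphic to $\ros$. Faithfulness of $\ros$ therefore becomes faithfulness of the action of $\Gamma$ on $\tree$. At this point $\Gamma$ is a non-elementary (residually finite) hyperbolic group acting faithfully and spherically transitively on a rooted tree, which is exactly the setting of Proposition \ref{prophyp}.

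Next I would apply Proposition \ref{prophyp} to obtain that the set
\[
S := \bigcup_{v \in \tree^0} \St_{\Gamma}(v\tree)
\]
is finite; let $n := |S|$. Then Theorem \ref{theosuffisant}.\ref{theosuffisant2}, applied to this faithful action of $\Gamma$ on $\tree$, yields $\lambda \prec \rho_{\tree}^{\otimes n}$. Using the isomorphism $\rho_{\tree}^{\otimes n} \simeq \ros^{\otimes n}$ coming from the correspondence of Section \ref{sec1}, we conclude $\lambda \prec \ros^{\otimes n}$, which is the desired statement.

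There is essentially no obstacle: both ingredients have already been established in the paper, and the only thing to verify is that the tree-theoretic hypotheses of Proposition \ref{prophyp} and Theorem \ref{theosuffisant}.\ref{theosuffisant2} are satisfied, which is immediate from the faithfulness of $\ros$ and the fact that the tree $\tree(\mathcal S)$ built from a decreasing sequence of finite index subgroups automatically carries a spherically transitive $\Gamma$-action.
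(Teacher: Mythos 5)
Your proposal is correct and follows exactly the paper's (implicit) argument: the paper states the corollary as a direct consequence of Proposition \ref{prophyp} and Theorem \ref{theosuffisant}.\ref{theosuffisant2}, using the correspondence of Section \ref{sec1} to pass from the sequence $\mathcal S$ to a faithful, spherically transitive action on the rooted tree $\tree(\mathcal S)$. Your write-up simply makes that concatenation explicit, with $n = \bigl|\bigcup_{v\in\tree^0}\St_{\Gamma}(v\tree)\bigr|$, and there is nothing to add.
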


The next application shows how to use Proposition \ref{prophyp} to get information on the spectrum of Schreier graphs.

\begin{cor}\label{corHTS}
Let $\Gamma$ be non-elementary, torsion free, residually finite hyperbolic group.  Let $\mathcal S = \left( H_n\right)_n $ be a decreasing sequence of finite index subgroups of $\Gamma$ such that the representation $\ros$ is faithful. 

Let $M_F := \frac{1}{2|F|}\sum_{g\in F} g+g^{-1}$ be the Markov operator associated to a finite generating set $F$ of $\Gamma$ which does not contain 1. Then 
\[
 \left[ -\frac{1}{|F|}, \frac{\sqrt{2|F|-1}}{|F|}\right] \subset \overline{\bigcup_{n} \text{sp} \left( \lambda_{\Gamma/H_n}\left(M_F \right) \right)} .
\]
   \end{cor}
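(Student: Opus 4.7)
The plan is to combine Proposition~\ref{prophyp} with Theorem~\ref{theosuffisant}.\ref{theosuffisant1} to convert the hypothesis into a weak-containment statement, and then to invoke the classical lower bound on the spectrum of the Markov operator on the regular representation of a non-elementary hyperbolic group.

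First I would reduce the statement to the setting of Theorem~\ref{theosuffisant}.\ref{theosuffisant1}. Since $\ros$ is faithful, the action of $\Gamma$ on the associated rooted tree $\tree:=\tree(\mathcal S)$ is faithful and spherically transitive, and $\ros \simeq \rot$ (Section~\ref{sec1}). By Proposition~\ref{prophyp}, the set $\bigcup_{v \in \tree^0} \St_\Gamma(v\tree)$ is finite; every $\St_\Gamma(v\tree)$ is thus a finite subgroup of the torsion-free group $\Gamma$, and must be trivial. Theorem~\ref{theosuffisant}.\ref{theosuffisant1} then yields $\lambda \prec \rot \simeq \ros$. Using the $C^*$-algebraic reformulation of weak containment recalled in the introduction, this translates into
\[
\text{sp}(\lambda(M_F)) \;\subset\; \text{sp}(\ros(M_F)) \;=\; \overline{\bigcup_n \text{sp}(\lambda_{\Gamma/H_n}(M_F))},
\]
where the equality uses the direct-sum decomposition $\ros = \bigoplus_n \lambda_{\Gamma/H_n}$.

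It remains to prove that $\bigl[-\tfrac{1}{|F|}, \tfrac{\sqrt{2|F|-1}}{|F|}\bigr] \subset \text{sp}(\lambda(M_F))$. The upper endpoint is Kesten's universal lower bound for $\|\lambda(M_F)\|$, coming from the fact that the $2|F|$-regular tree is the universal cover of the Cayley graph of $(\Gamma,F)$ and the simple random walk on that tree has spectral radius $\tfrac{\sqrt{2|F|-1}}{|F|}$. The lower endpoint exploits the torsion-freeness of $\Gamma$: each $g\in F$ has infinite order, so $\text{sp}(\lambda(g))=S^1$, and one can build unit vectors $\xi$ with $g\xi\approx -\xi$ that are approximately orthogonal to $h\xi$ for $h\in F\setminus\{g\}$, giving $\langle M_F\xi,\xi\rangle\approx -\tfrac{1}{|F|}$. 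The absence of gaps in $\text{sp}(\lambda(M_F))$ between these two extremes comes from the locally tree-like geometry of the Cayley graph, via a resolvent/functional-equation analysis on non-elementary hyperbolic Cayley graphs.

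The main obstacle is this last spectral step: the weak-containment piece of the argument, which is the contribution of the paper, is an immediate consequence of Proposition~\ref{prophyp}, Theorem~\ref{theosuffisant}.\ref{theosuffisant1} and torsion-freeness; but extracting a full interval (rather than isolated spectral values such as $\|\lambda(M_F)\|$) from the spectrum of $\lambda(M_F)$ on a non-amenable hyperbolic Cayley graph is the delicate, classical but non-trivial harmonic-analytic input.
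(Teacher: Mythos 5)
Your reduction to the weak-containment statement is exactly the paper's: Proposition~\ref{prophyp} gives finiteness of $\bigcup_v \St_\Gamma(v\tree)$, torsion-freeness forces these stabilizers to be trivial, Theorem~\ref{theosuffisant}.\ref{theosuffisant1} gives $\lambda\prec\ros$, and the $C^*$-algebraic reformulation yields $\text{sp}(\lambda(M_F))\subset\overline{\bigcup_n \text{sp}(\lambda_{\Gamma/H_n}(M_F))}$. That part is correct and matches the paper. The existence of the two extreme spectral values is also essentially what the paper uses (it cites Proposition~5 of \cite{HPRGV93} for a pair $\{m,M\}\subset\text{sp}(\lambda(M_F))$ with $m\le -\tfrac{1}{|F|}$ and $M\ge\tfrac{\sqrt{2|F|-1}}{|F|}$), though your sketch of the lower endpoint via vectors with $g\xi\approx-\xi$ would need to be made precise.

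The genuine gap is in your final step: you assert that the absence of gaps in $\text{sp}(\lambda(M_F))$ follows from ``the locally tree-like geometry of the Cayley graph, via a resolvent/functional-equation analysis.'' No such classical harmonic-analytic argument exists for a general non-elementary hyperbolic group; explicit resolvent computations of that kind are available for free groups and free products, not for arbitrary hyperbolic Cayley graphs, and the spectrum of $\lambda(M_F)$ is not known explicitly in this generality. The paper's actual mechanism is entirely different and is the crux of the corollary: since $\Gamma$ is torsion-free and hyperbolic, the Baum--Connes conjecture holds for it \cite{lafforgue02,mineyev-guoliang02}, hence $\Gamma$ satisfies the Kadison--Kaplansky (idempotent) conjecture, so $\cs{\lambda}(\Gamma)$ contains no nontrivial projections; a self-adjoint element with disconnected spectrum would produce such a projection by continuous functional calculus, so $\text{sp}(\lambda(M_F))$ is connected and therefore contains the whole interval between $m$ and $M$. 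Without this (or some substitute for it), your argument does not close, and this is precisely where torsion-freeness is used a second time.
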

\begin{proof}
As $M$ is self-adjoint, the closure of $\bigcup_{n} \text{sp} \left( \lambda_{\Gamma/H_n}\left(M_F \right) \right) $ equals the spectrum of $\ros \left( M \right)$ and is a compact subset of $\R$. Now, the assumption that $\Gamma$ is torsion free, Proposition \ref{prophyp} and Theorem \ref{theosuffisant}.\ref{theosuffisant1} together imply that $\ros$ weakly contains the regular representation $\lambda$ of $\Gamma$. In particular,
\[
 \text{sp}\left( \lambda \left( M_F\right)\right) \subset  \text{sp} \left( \ros\left(M_F \right) \right)=\overline{\bigcup_{n} \text{sp} \left( \lambda_{\Gamma/H_n}\left(M_F \right) \right)}.
\]

Thanks to Proposition 5 in \cite{HPRGV93}, we know that $\text{sp}\left( \lambda \left( M\right)\right)$ contains a pair $\left\lbrace m, M  \right\rbrace $ with $m\leq -\frac{1}{|F|} $ and $M \geq \frac{\sqrt{2|F|-1}}{|F|}$. Since the Baum–Connes conjecture is true for hyperbolic groups \cite{lafforgue02,mineyev-guoliang02} and $\Gamma$ is torsion free, it fullfills the Kadison-Kaplansky conjecture. Therefore, the spectrum $\text{sp}\left( \lambda \left( M\right)\right) $ is connected. This proves Corollary \ref{corHTS}.
\end{proof}

We conclude with an analogous result to Proposition \ref{propHR}, in the case of uniform lattices in $\R$-rank 1 Lie groups.  

\begin{cor}
Let $\Gamma$ be a uniform lattice in a connected simple real Lie group $G$ with finite center and $\R$-rank 1. Assume that $\Gamma$ acts faithfully and spherically transitively on a rooted tree $\tree$. Then for all subtrees $v\tree$ of $\tree$, the stabilizer $\St_{\Gamma} \left( v\tree \right)$ is trivial and thus the representation $\rot$ of $\Gamma$ weakly contains the regular $\lambda$.
\end{cor}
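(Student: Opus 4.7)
The strategy is to upgrade Proposition \ref{prophyp} by ruling out non-trivial finite stabilizers via Borel density, which is available because $G$ is a simple Lie group with finite centre and therefore has no compact factors. First I would observe that $\Gamma$ is a non-elementary Gromov hyperbolic group: being cocompact in the rank-one semisimple $G$, it acts properly and cocompactly by isometries on the negatively curved symmetric space $G/K$, and its exponential growth prevents it from being virtually cyclic. Proposition \ref{prophyp} then ensures that $\bigcup_{v\in \tree^0}\St_{\Gamma}(v\tree)$ is finite, so every subtree stabilizer $H:=\St_{\Gamma}(v\tree)$ is a finite subgroup of $\Gamma$.

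Fix a vertex $v\in L_n$ with $H$ non-trivial and aim for a contradiction. Any element of $\St_{\Gamma}(L_n)$ fixes $v$ and hence preserves $v\tree$, so it normalizes $H$; thus $N_{\Gamma}(H)$ has finite index in $\Gamma$. Since $H$ is finite, the conjugation map $N_{\Gamma}(H)\to \mathrm{Aut}(H)$ has finite image, so its kernel $C_{\Gamma}(H)$ is of finite index in $N_{\Gamma}(H)$, and therefore in $\Gamma$.

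The main step is then to show $H\subset Z(\Gamma)$, and this is where the rank-one Lie group hypothesis enters through Borel density: every finite-index subgroup of a lattice in $G$ is Zariski dense in $G$, so $C_{\Gamma}(H)$ is Zariski dense. The subset $C_G(H)\subset G$ is Zariski closed (cut out by the polynomial conditions ``commutes with each element of $H$'') and contains the Zariski dense $C_{\Gamma}(H)$, so $C_G(H)=G$, i.e.\ $H\subset Z(G)$. Since the same density argument shows $Z(\Gamma)=\Gamma\cap Z(G)$, we conclude $H\subset Z(\Gamma)$.

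To finish, spherical transitivity of $\Gamma$ on $L_n$ gives
\[ \bigcap_{\gamma\in\Gamma}\gamma H\gamma^{-1}=\bigcap_{w\in L_n}\St_{\Gamma}(w\tree), \]
and this last intersection fixes every descendant of $L_n$ together with all their ancestors, hence fixes $\tree$ pointwise, hence is trivial by faithfulness. Centrality of $H$ forces $\gamma H\gamma^{-1}=H$ for every $\gamma$, so the intersection is $H$ itself, contradicting $H\neq\{1\}$. Therefore all subtree stabilizers are trivial, and Theorem \ref{theosuffisant}.\ref{theosuffisant1} yields $\lambda\prec\rot$. I expect the most delicate point to be invoking Borel density in precisely the form required (finite-index subgroups of a lattice in a simple Lie group with finite centre remain Zariski dense, and $Z(\Gamma)=\Gamma\cap Z(G)$); the remainder is a direct merger of Proposition \ref{prophyp} with the normalizer analysis at the start of the proof of Theorem \ref{theonec}.
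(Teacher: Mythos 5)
Your proof is correct and follows essentially the same route as the paper: hyperbolicity of the uniform rank-one lattice plus Proposition \ref{prophyp} to get finiteness of $\St_{\Gamma}(v\tree)$, then the Borel Density Theorem to force centrality in $G$, and finally spherical transitivity and faithfulness to conclude triviality. The only (cosmetic) difference is that you apply Borel density to the Zariski-closed centralizer $C_G(H)$ containing the finite-index subgroup $C_{\Gamma}(H)$, whereas the paper applies it to the Zariski-closed normalizer and then invokes the fact that a finite normal subgroup of a connected group is central; both variants are equally valid.
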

\begin{proof}
Such a uniform lattice is hyperbolic. Let $v$ be a vertex of $\tree$. Proposition \ref{prophyp} implies that the stabilizer $\St_{\Gamma} \left( v\tree \right) $ is a finite group. Therefore, its normalizer $N_G \left(\St_{\Gamma} \left( v\tree \right) \right)$ in $G$  is Zariski closed. Moreover, $\St_{\Gamma} \left( v\tree \right)$ is a normal subgroup of $\St_{\Gamma} \left( v \right)$; the later, having finite index in $\Gamma$, is also a lattice in $G$. By the Borel Density Theorem, the Zariski closure of $\St_{\Gamma} \left( v \right)$ is $G$ and therefore $\St_{\Gamma} \left( v\tree \right) $ is a finite normal subgroup of $G$. Hence, $\St_{\Gamma} \left( v\tree \right) $ has to be central in $G$, thus in $\Gamma$. Finally,
\[
 \St_{\Gamma} \left( v\tree \right) = \bigcap_{\gamma \in \Gamma} \gamma\St_{\Gamma} \left( v\tree \right)\gamma^{-1}=\left\lbrace 1 \right\rbrace, 
\]
the last equality coming from the transitivity of the $\Gamma$-action on $L_n$. Theorem \ref{theosuffisant}.\ref{theosuffisant1} applies.
\end{proof}

\subsection{Weakly branched groups}
This last section deals with natural examples of couple $\left(G,\mathcal T\right) $ where $G$ is a finitely generated group acting faithfully and spherically transitively on a rooted tree $\tree$ for which the conditions in Theorem \ref{theosuffisant}.\ref{theosuffisant1} and \ref{theosuffisant}.\ref{theosuffisant2} are ''far'' from being true. These examples come from the class of \emph{weakly branched} groups.

\begin{defi}
 Let $\tree$ be a \emph{regular} rooted tree, which means that $\tree = \tree_{\tilde d}$ where $\tilde d$ is the constant sequence $\tilde d=d,d,\dots,d,\dots$ with $d>1$. A finitely generated subgroup $G$ of $\aut$ is said weakly branched if its action on $\tree$ is spherically transitive and 
\[
 \forall v\in \tree^0, \ \Ri_G \left(v\right) := \bigcap_{w\in L_n \setminus \left\lbrace v\right\rbrace } \St_G \left( w\tree\right)
\]
is non-trivial. 
\end{defi}
The subgroup $\Ri_G \left(v\right) $ consists of elements which only act on the subtree $v\tree$. It is called the \emph{rigid stabilizer} of $v$. It is easy to see that if $G$ is weakly branched, these rigid stabilizers are infinite. So \emph{a fortiori} are the stabilizers $\St_G \left(v\tree\right)$ of subtrees. 

We refer to \cite{bourbakiandrzej,surveybranche} for a survey on weakly branched groups. Concerning the link between $\rot$ and $\lambda$ for such a weakly branched group, one has:

\begin{prop}\label{propwb}
 Let $G$ be a weakly branched subgroup of $\aut$. For every $n>0$, the $*$-homomorphism $\rot^{\otimes n}$ defined on $\C G$ is not faithful. In particular, the representation $\rot^{\otimes n}$ does not weakly contain the regular $\lambda$.
\end{prop}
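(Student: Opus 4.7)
The plan is to exhibit, for each $n$, a non-zero element of $\C G$ killed by $\rot^{\otimes n}$, in the same spirit as Lemmas \ref{lem1} and \ref{lem2}, but exploiting the stronger structure coming from weak branching. Let $d \geq 2$ be the branching degree of the regular tree $\tree$, fix $k$ large enough that $|L_k| = d^k > n$, and for each $v \in L_k$ use the weakly branched hypothesis to pick a non-trivial element $g_v \in \Ri_G(v)$. Set
\[
M := \prod_{v \in L_k}(1 - g_v) \in \C G.
\]
Since the supports of $g_v$ and $g_w$ are the disjoint subtrees $v\tree$ and $w\tree$ for $v \neq w$ in $L_k$, the elements $g_v$ pairwise commute, so the factors of $M$ commute and the product is well defined irrespective of the order.

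Next I would check that $\rot^{\otimes n}(M) = 0$. Apply $M$ to a basis vector $\delta_{(u_1, \dots, u_n)}$ of $\ell^2((\tree^0)^n)$. The set $\{ v \in L_k : v\tree \cap \{u_1, \dots, u_n\} \neq \emptyset \}$ contains at most one vertex per $u_j$ (the unique level-$k$ ancestor of $u_j$ if $u_j$ sits at level $\geq k$, and nothing if $u_j$ lies above $L_k$), so it has cardinality at most $n < |L_k|$. Pick $v_0 \in L_k$ outside this set: then $g_{v_0}$ acts trivially outside $v_0\tree$, hence fixes the whole tuple $(u_1, \dots, u_n)$, giving $\rot^{\otimes n}(1 - g_{v_0}) \cdot \delta_{(u_1, \dots, u_n)} = 0$. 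Commuting the factor $(1 - g_{v_0})$ to the right in $M$, we get $\rot^{\otimes n}(M) \delta_{(u_1, \dots, u_n)} = 0$ for every basis vector, and thus $\rot^{\otimes n}(M) = 0$.

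The main step is then to show $M \neq 0$ in $\C G$. Expanding yields $M = \sum_{S \subset L_k} (-1)^{|S|} g_S$ with $g_S := \prod_{v \in S} g_v$ (unambiguously defined by commutativity). I would show that $S \mapsto g_S$ is injective, whence $M$ is a sum of $2^{|L_k|}$ distinct group elements with coefficients $\pm 1$ and is in particular non-zero. Indeed, if $S \neq T$, pick $v$ in the symmetric difference $S \triangle T$; every $g_w$ with $w \in L_k$, $w \neq v$ acts trivially on the subtree $v\tree$, so the restriction of $g_S g_T^{-1}$ to $v\tree$ equals $g_v^{\pm 1}$, which is non-trivial because $g_v \neq 1$. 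Hence $g_S \neq g_T$.

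Finally, the non-faithfulness of $\rot^{\otimes n}$ on $\C G$ rules out weak containment: $\lambda \prec \rot^{\otimes n}$ would give $\|\lambda(f)\| \leq \|\rot^{\otimes n}(f)\|$ for every $f \in \ell^1(G)$, forcing $\rot^{\otimes n}$ to be injective on $\C G$ (since $\lambda$ is), contradicting $\rot^{\otimes n}(M) = 0$ with $M \neq 0$. The main subtlety of the argument is the injectivity of $S \mapsto g_S$, which is precisely what the disjoint-support structure of rigid stabilizers buys us, bypassing the torsion versus torsion-free dichotomy handled in Lemma \ref{lem2}.
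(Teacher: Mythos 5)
Your proof is correct and follows essentially the same strategy as the paper: pick commuting non-trivial elements $g_v$ in the rigid stabilizers of the vertices of a sufficiently deep level $L_k$ and show that $M=\prod_{v\in L_k}(1-g_v)$ is a non-zero element of $\C G$ annihilated by $\rot^{\otimes n}$ because some $v_0\in L_k$ always has its subtree disjoint from any given $n$-tuple. The only (harmless) differences are that the paper first reduces to exponents of the form $d^k-1$ via Remark \ref{rmqdebile} while you treat general $n$ directly by taking $d^k>n$, and your non-vanishing step (injectivity of $S\mapsto g_S$ by restricting to a subtree indexed by the symmetric difference) is a mild strengthening of the paper's observation that no odd product $\prod_{v\in\mathcal A}g_v$ can equal $1$.
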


\begin{proof}
The method is the same that the one we used for the proof of Theorem \ref{theonec}. First, we see that remark \ref{rmqdebile} implies that we need only to prove the proposition for integers of the form $d^n -1$. For every vertex $v$ in the $n$-th level $L_n$ of $\tree$, choose $g_v$ a non-trivial element in the infinite group $\Ri_G \left(w\right)$. We have
\begin{equation}\label{lasteq}
  \Phi^{(n)}\left(g_v \right) = \left(1,1,\dots,1,\varphi_v \left(g_v\right),1,\dots \right)
\end{equation}
where the non-trivial element $\varphi_v \left( g_v \right)$ appears in the position corresponding to $v$. It is clear that the $g_v$'s commute. Let  
\begin{equation*}
M=\prod_{v\in L_n} \left( 1- g_v \right) \in \C G. 
\end{equation*}

Then, $M \neq 0$. Indeed, the nullity of $M$ would imply the existence of a subset $\mathcal A$ of $L_n$ (of odd cardinality) such that 
\[1=\prod_{v\in \mathcal A} g_v,\]
and this is impossible because, if $w$ is any vertex in $\mathcal A$, (\ref{lasteq}) implies that 
\[\varphi_w \left(\prod_{v\in \mathcal A}  g_v \right)=\varphi_w \left(g_w \right)\neq 1.\]

Let us show that $\rot^{\otimes d^n -1} \left( M \right)$ is 0. This is equivalent to proving that for every $n$-tuple $\left(z_1,\dots,z_{d^n -1} \right)$ consisting of elements in $\tree^0$, 

\[\rot^{\otimes d^n -1} \left( M \right) \left(\delta_{z_1}\otimes \dots \otimes\delta_{z_{d^n -1}} \right)=0 \]

There is necessarily a vertex $v_0$ among the $d^n$ in $L_n$ such that the subtree $v_0\tree$ does not contain any $z_i$. By construction, 
\[\rot^{\otimes d^n -1} \left(1-g_{v_0} \right)\left(\delta_{z_1}\otimes \dots \otimes\delta_{z_{d^n -1}} \right)=0\]

As the $g_v$'s commute, 
\[
 M=\left( \prod_{v \in L_n \setminus \left\lbrace v_0\right\rbrace } \left( 1-g_v\right) \right) (1-g_{v_0})
\]
and this implies $\rot^{\otimes d^n -1}\left( M \right) \left(\delta_{z_1}\otimes \dots \otimes\delta_{z_{d^n -1}} \right) =0 $. 
\end{proof}

\bibliographystyle{amsalpha}
\bibliography{bibliothese}
\end{document}